\numberwithin{equation}{section}
\def \de{\delta}
\def \er{\varepsilon}
\def \eps{\varepsilon}
\def \la{\lambda}
\def \si{\sigma}
\def \sk{\mbox{ }}
\renewcommand{\l}{\left}
\renewcommand{\r}{\right}
\def \G{\Gamma}
\def \O{\Omega}
\def \C{\mathbb{C}}
\def \N{\mathbb{N}}
\def \R{\mathbb{R}}
\def \Z{\mathbb{Z}}
\def \T{\mathbb{T}}
\def\dd{\partial}
\newcommand{\beq}{\begin{equation}}
\newcommand{\eeq}{\end{equation}}
\def\dom{\operatorname{Dom}}
\def\im{\operatorname{Im}}
\newcommand{\eqdef}{\stackrel{\rm def}{=\kern-3.6pt=}}
\newcommand{\<}{\langle}
\renewcommand{\>}{\rangle}
\theoremstyle{plain}
\newtheorem{theorem}{\bf Theorem}[section]
\newtheorem{lemma}[theorem]{\bf Lemma}
\newtheorem{cor}[theorem]{\bf Corollary}
\theoremstyle{definition}
\theoremstyle{remark}
\newtheorem{rem}[theorem]{\bf Remark}
\renewcommand{\le}{\leqslant}
\renewcommand{\ge}{\geqslant}
\newcommand{\loc}{\mathop{\mathrm{loc}}\nolimits}
\renewcommand{\qed}{\vrule height7pt width5pt depth0pt}
\title{Absolute continuity of the spectrum of the periodic Schr\"odinger operator in a layer and in a smooth cylinder}
\date{}
\author{N.~Filonov \and I.~Kachkovskiy\thanks{The first author was supported by RFBR grant 08-01-00209.}}
\begin{document}
\maketitle

\begin{abstract}
We consider the Schr\"odinger operator
$H = -\Delta + V$ in a layer or in a $d$-dimensional cylinder.
The potential $V$ is assumed to be periodic with respect to some lattice.
We establish the absolute continuity of $H$, assuming $V \in L_{p, \loc}$, where $p$ is a real number greater than $d/2$ in the case of a layer, and $p > \max (d/2, d-2)$ 
for the cylinder.
\footnote{Keywords: Schr\"odinger operator, 
Periodic Coefficients, Absolutely Continuous Spectrum.}
\end{abstract}

\section{Introduction}

Let $M$ be a smooth $k$-dimensional compact Riemannian manifold, let also
$$
\Xi = M \times \R^m,\sk\quad d := \dim \Xi = k+m.
$$
We are interested in the type of the spectrum of the Schr\"odinger operator
$H = - \Delta + V$ in a cylinder $\Xi$.
The function $V$ is supposed to be periodic.
If $M$ is a manifold with boundary, we study the operator $H$
with various boundary conditions at
$\partial \Xi = \partial M \times \R^m$.
We are going to prove that, under some assumptions on $V$,
the spectrum of $H$ is absolutely continuous
(see Theorems \ref{t1} and \ref{t2} below).

The points of $\Xi$ are denoted by
 $(x, y)$, $x \in M$, $y \in \R^m$.
Let $\Gamma$ be a lattice in $\R^m$,
\beq
\label{lattice}
\G = \left\{ l = \sum_{j=1}^m l_j b_j, \quad l_j \in \Z \right\} ,
\eeq
where $\{b_j\}_{j=1}^m$ is a basis of $\R^m$. 
Assume that $V$ 
is periodic over the "longitudinal" variables:
\beq
\label{v}
V(x,y+l)=V(x,y),\quad x\in M,\sk y\in \R^m,\sk l\in\Gamma.
\eeq
Thanks to $V$ being periodic, it is enough to know $V$ on $M\times\Omega$, where
\beq
\label{yach}
 \O = \left\{ y = \sum_{j=1}^m y_j b_j, \quad y_j \in [0, 1) \right\}
\eeq
is an elementary cell of $\G$.

Let us introduce the reader to the main results regarding absolute continuity of $H$.
Usually, in the sufficient conditions it is assumed that the potential $V$ belongs to
$L_p (M \times \O)$ or to a Lorentz space
$L_{p, \infty}^0 (M \times \O)$. 
We recall that if $N$ is a set of finite measure, then $L_p (N) \subset L_{p, \infty}^0 (N) \subset L_{p - \er} (N)$
for all $\er > 0$.

The two-dimensional case, $d=2$
($\Xi$ is a whole plane or a strip), has been studied in much detail.
In \cite{BSu1, SuSh2, SoSh}, the absolute continuity of $H$
is proved for $V\in L_p$, $p>1$.
From now on, we consider only $d\geqslant 3$.

The case of $k=0$, corresponding to the operator in the whole space, is also well studied.
In \cite{Shen}, the absolute continuity is established in the "critical"  
case $V \in L_{d/2,\infty}^0 (\O)$ for all $d\geqslant 3$
(see also \cite{D}).
In \cite{Su}, the case $k = 1$ ($M$ is a line segment, $\Xi$ is a plane-parallel layer) 
is studied, and for
$V \in L_{p, \infty}^0 (M \times \O)$, where
$p = \max (d/2, d-2)$, the absolute continuity of $H$ is obtained. The author also considers the third type boundary condition.
Finally, the case $k\geqslant 2$ is studied in \cite{KF}, and
it is established that $H$ is absolutely continuous if $V \in L_{d-1} (M \times \O)$.

In the present paper, we prove (see Theorem \ref{t1} below) 
the absolute continuity of $H$ with $V \in L_p (M \times \O)$
for all $p>d/2$ in the following cases:
1) $\dd M = \emptyset$;
2) $M$ is a line segment, $k=1$;
3) $d=3$ or 4. 
If $M$ is a manifold with boundary, $k>1$, and $d>4$,
we obtain only $V \in L_{d-2} (M \times \O)$ as a sufficient condition.
In the case $k=1$ we also consider the third type boundary condition (see Theorem \ref{t2}).

All mentioned results are obtained using the Thomas scheme  \cite{Th}, its key point is to study the operator family
$$
H(\xi)=-\Delta_x+(-i\nabla_y+\xi)^{*}(-i\nabla_y+\xi)+V(x,y),
$$ 
where $\xi$ is called quasimomentum.
To obtain the resolvent estimates for the free operator
$H_0 (\xi)$, corresponding to $V=0$, we use the spectral cluster estimates from \cite{So} (the idea of using these estimates arose in \cite{Shen}).

\section{Formulation of the result}
Let $M$ be a compact smooth Riemannian manifold with or without boundary, $\dim M = k$. 
Consider a $d$-dimensional cylinder
$$
\Xi = M \times \R^m,\sk\quad d=k+m\geqslant 3.
$$
Let $\Gamma$ be a lattice \eqref{lattice}, 
let $\O$ be a cell \eqref{yach}, and let
$V(x,y)$ be a real-valued function, 
satisfying \eqref{v}.
Assume that
\begin{equation}
\label{245}
V \in L_{d/2} (M \times \O) .
\end{equation}
Consider the following quadratic form in $L_2(\Xi)$:
\beq
\label{hn}
h [u,u]=\int \limits_{\Xi}\l(|\nabla u(x,y)|^2 + V(x,y)|u(x,y)|^2\r)\,dx\,dy,
\quad \dom h=H^1(\Xi).
\eeq
If $M$ has a boundary, $\dd M \neq \emptyset$,
then we denote \eqref{hn} by $h_N$.
In this case we are also going to study a form
$h_D = h_N \mid_{H^1_0 (\Xi)}$.

It is well known that, assuming \eqref{245}, the form $h$ 
(resp. $h_N$, $h_D$) is closed and semi-bounded from below.
In $L_2(\Xi)$, it corresponds to a semi-bounded operator
$H$ (resp. $H_N$, $H_D$), which is called the Schr\"odinger 
operator in $\Xi$ (resp. the Schr\"odinger operator with Dirichlet or Neumann boundary conditions).

\begin{theorem}
\label{t1}
Let $M$ be a compact smooth Riemannian manifold with or without boundary,
$\dim M = k$, $\Xi = M \times \R^m$, $d=k+m\geqslant 3$.
Let $\Gamma$ be a lattice \eqref{lattice},
let $V$ be a real-valued $\G$-periodic function in $\Xi$.
Assume that
$V \in L_p (M \times \O)$, where
\begin{itemize}
\item $p > d/2$, if $\dd M = \emptyset$;
\item $p > d/2$, if $\dd M \neq \emptyset$ and $k=1$ {\rm(}$M$ is a line segment{\rm)};
\item $p > d/2$, if $\dd M \neq \emptyset$ and $d=3$ or $d=4$;
\item $p > d-2$, if $\dd M \neq \emptyset$ and $d \ge 5$.
\end{itemize}
Then the spectra of $H$ {\rm(}$\dd M = \emptyset${\rm)}, 
$H_N$ and $H_D$ {\rm(}$\dd M \neq \emptyset${\rm)} are absolutely continuous.
\end{theorem}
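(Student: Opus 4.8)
The plan is to follow the Thomas scheme, reducing absolute continuity of $H$ to an analytic Fredholm argument on the fibered operators $H(\xi)$, $\xi \in \C^m$, obtained from the Floquet decomposition in the longitudinal variables $y$. After a unitary transformation $u \mapsto e^{i\langle \xi, y\rangle} u$ one works with the family
\[
H(\xi) = -\Delta_x + (-i\nabla_y + \xi)^{*}(-i\nabla_y + \xi) + V(x,y)
\]
acting in $L_2(M \times \O)$ with periodic (and, if $\dd M \neq \emptyset$, Dirichlet or Neumann) boundary conditions. It is standard that absolute continuity will follow once we show that for some fixed real $\lambda$ in a dense-in-$\R$ set of energies, the operator $H(\xi) - \lambda$ is \emph{invertible} for at least one complex value of $\xi$ on each complex line of quasimomenta; equivalently, the meromorphic family $(H(\xi)-\lambda)^{-1}$ is not identically singular. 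I would parametrize $\xi = \xi_0 + it\,e$ for a suitable real direction $e$ and real offset $\xi_0$, let $t \to +\infty$, and prove that $H_0(\xi) - \lambda$ becomes invertible with a norm bound on its inverse that, when combined with the Birman--Schwinger-type factorization $H(\xi)-\lambda = H_0(\xi)-\lambda + V$, kills the perturbation.

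The key analytic input is a resolvent estimate for the free operator $H_0(\xi)$, and this is where the spectral cluster estimates of Sogge (via Shen's idea) enter. Writing $H_0(\xi) = -\Delta_x + (\text{quadratic polynomial in } y\text{-momenta})$ and expanding in the eigenbasis $\{\psi_n\}$ of the Laplacian $-\Delta_x$ on $M$ (with the chosen boundary condition) with eigenvalues $\mu_n \sim n^{2/k}$, the operator $H_0(\xi)-\lambda$ block-diagonalizes over $n$, each block being a constant-coefficient operator in $y$ on the torus $\R^m/\G$. The first step is therefore the direct estimate for these torus blocks — a Parseval-type bound controlling $\|(\text{block})^{-1}\|$ on the scale where the symbol does not vanish. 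Then I would feed the transverse eigenfunction expansion into a weighted $L^p \to L^q$ bound: the quantitative spectral cluster estimate $\|\chi_{[\mu, \mu+1]}(-\Delta_x)\|_{L^{p'} \to L^2}\, $-type inequality of Sogge gives, after summation over clusters, a bound of the schematic form
\[
\|(H_0(\xi) - \lambda)^{-1}\|_{L_{p'}(M \times \O) \to L_p(M \times \O)} \le C(\lambda, \xi_0)\, t^{-\kappa}
\]
for a positive exponent $\kappa = \kappa(d, k)$ valid as $t \to \infty$, after a judicious choice of $\lambda$ interlaced with the eigenvalue gaps of the $x$-Laplacian. Combining this with $\|V\|_{L_{p/2}(M\times\O)}$ controlling $V$ as a multiplication operator $L_p \to L_{p'}$ (Hölder), we get that $(H_0(\xi)-\lambda)^{-1} V$ has operator norm $< 1$ for large $t$, hence $H(\xi) - \lambda$ is invertible and the Thomas scheme concludes absolute continuity.

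The main obstacle — and the reason the hypotheses on $p$ split into four cases — is the interplay between the decay exponent $\kappa$ available from Sogge's estimates and the integrability index $p$ needed to absorb $V$ by Hölder. When $\dd M = \emptyset$, or when $k=1$ (so $M$ is an interval and the transverse spectrum is explicit, $\mu_n \sim n^2$), or in low dimension $d = 3,4$, one can push the cluster estimate far enough to reach the natural threshold $p > d/2$; the delicate point is choosing $\lambda$ in the complement of the (Weyl-distributed but possibly clustering) spectrum of $-\Delta_x$ so that the smallest denominator $|\mu_n + q(\eta + \xi) - \lambda|$ over the relevant range of transverse frequencies $\eta$ stays bounded below uniformly. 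When $\dd M \neq \emptyset$, $k \ge 2$, and $d \ge 5$, the boundary degrades Sogge's estimates (the available exponents in $L^p$-cluster bounds for manifolds with boundary are weaker), and one can only close the argument for the larger range $p > d - 2$. I would therefore organize the proof so that the free resolvent estimate is stated once as a lemma with a case-dependent exponent, proved using the eigenfunction expansion and Sogge's bounds, and then have a short uniform conclusion via the Thomas/analytic-Fredholm argument common to all cases.
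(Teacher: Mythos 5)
Your overall framework --- Thomas scheme, complexified quasimomentum $\xi$ on the fibers, and Sogge-type spectral cluster estimates as the analytic input --- matches the paper's strategy. But the central technical idea of the paper is applied to the wrong operator in your proposal, and that single difference is what separates the results of this paper from earlier work.

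You propose to expand in the eigenbasis of $-\Delta_x$ on $M$ alone, to apply Sogge's spectral cluster estimate to $-\Delta_x$ on the $k$-dimensional manifold $M$, and then to handle the $y$-blocks on the torus by Parseval. This is precisely the structure of the arguments in Suslina~\cite{Su} (layer) and in the authors' earlier paper~\cite{KF} (cylinder), and it produces the weaker thresholds of those works ($p>\max(d/2,d-2)$, resp.\ $p>d-1$). The new idea in the present paper is to treat the whole free operator $H_0 = H_0(0)$ as a \emph{single} elliptic second-order operator on the $d$-dimensional compact manifold $M\times\T$ and apply Sogge's estimate (Theorem~\ref{mainit}/Corollary~\ref{sl22}) or Smith--Sogge (Theorem~\ref{mainit2}) directly to the spectral projectors $E_k(H_0)$ on $M\times\T$. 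This is what makes the exponent in Condition $\mathrm{A}(q)$ depend on the full dimension $d$ rather than on $k$, and it is the only route to $p>d/2$ for $\dd M=\emptyset$, $k\ge 2$. Your block-by-block scheme cannot reach this threshold; the gap is not cosmetic.

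Two secondary points. First, for $k=1$ with boundary the paper does not rely on $\mu_n\sim n^2$ being ``explicit''; it uses a reflection argument (Theorem~\ref{prod_t}): extend by odd/even reflection to $S^1\times N_0$, write the Dirichlet/Neumann cluster projector on $[0,a]\times N_0$ as a difference/sum of cluster projectors of a boundaryless operator, and apply the closed-manifold estimate. This is why the layer case achieves $p>d/2$ while a general $M$ with boundary in high dimension does not. Second, your discussion of ``choosing $\lambda$ interlaced with eigenvalue gaps'' and of proving the bound only for a dense set of $\lambda$ is extraneous here: the paper reduces to $\lambda=0$ (the hypothesis on $V$ is invariant under adding constants), and the lower bound $(H_0(\tau)u,v)\ge 2\pi|\tau|$ comes from the uniform imaginary-part estimate $|\im h_{j,n}(\tau)| \ge 2\pi|\tau|$, holding for all $j,n$, not from energy interlacing. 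The invertibility is closed not by a Neumann series $\|(H_0-\lambda)^{-1}V\|<1$ but by the polar-decomposition trick $v=\Phi_0(\tau)u$ together with Lemma~\ref{auxl}, which splits $V=V_1+V_2$ with $\|V_1\|_{L_p}\le\delta$ and $V_2\in L_\infty$ so that the $V$-term is absorbed as $C\delta\,(H_0(\tau)u,v)+c(\delta)$, using the uniform (in $\tau$) bound $\||H_0(\tau)|^{-1/2}\|_{L_2\to L_q}\le C$ rather than a polynomially decaying $L^{p'}\to L^p$ resolvent norm.
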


In the case of a layer ($M$ is a line segment), Suslina's result \cite{Su} 
(see Theorem \ref{su} below)
allows us to consider the case of the third type boundary condition.
Let $k=1$, $\Xi = [0, a] \times \R^m$, 
let also $\si$ be a real $\G$-periodic function on 
$\dd\Xi = \{0; a\} \times \R^m$.
Consider a quadratic form
\begin{eqnarray}
\nonumber
h_{\sigma}[u,u]=
\int \limits_{\Xi} \l( |\nabla u(x,y)|^2 + V(x,y) |u(x,y)|^2 \r)\,dx\,dy \\
\label{h3}
+ \int\limits_{\R^m} \l( \sigma(a,y)|u(a,y)|^2 - \sigma(0,y)|u(0,y)|^2 \r)\,dy ,
\qquad \dom h_\si = H^1(\Xi).
\end{eqnarray}
If $\sigma\in L_m(\{0,a\}\times\O)$, then the form \eqref{h3} 
is closed and semi-bounded from below (see \cite{SuSh}). 
In the case $\si = 0$ the form $h_\si$ coincides with $h_N$.

\begin{theorem}
\label{t2}
Let $\Xi = [0, a] \times \R^m$, $d = m+1 \ge 3$,
let $\G$ be a lattice \eqref{lattice}.
Let $V$ be a $\G$-periodic function on $\Xi$, 
$V \in L_p ([0, a] \times \O)$ with $p > d/2$.
Let $\si$ be a  $\G$-periodic function on $\dd\Xi$, satisfying
\begin{equation}
\label{sigma}
\si \in L_q (\{0,a\}\times\O), \quad \text{where} \quad
q=2 \text{ for } d=3, \quad q=2d-2 \text{ for } d\ge 4.
\end{equation}
Then the spectrum of the Schr\"odinger operator $H_\si$, 
corresponding to the form \eqref{h3}, is absolutely continuous.
\end{theorem}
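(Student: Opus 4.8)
The plan is to follow the Thomas scheme \cite{Th}, exactly as in the proof of Theorem \ref{t1} for the segment $M=[0,a]$, and to deal with the extra boundary term by the trace estimates of \cite{Su}. First I would Gelfand-transform in the longitudinal variable $y$. Since $V$ and $\si$ are $\G$-periodic, $H_\si$ becomes unitarily equivalent to the direct integral $\int^{\oplus}H_\si(\xi)\,d\xi$ over the quasimomenta $\xi$ in the dual cell, where $H_\si(\xi)$ acts in $L_2([0,a]\times\O)$, is generated by the form \eqref{h3} with $-i\n_y$ replaced by $-i\n_y+\xi$ and with $\G$-periodic conditions in $y$, and still carries the third-type boundary condition at $x\in\{0,a\}$ (which is untouched by the transform). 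By the standard reasoning of the Thomas method (used in \cite{BSu1, Su, KF}), the spectrum of $H_\si$ is absolutely continuous as soon as one shows: for every $\la\in\R$ there is a complex-analytic family $\xi=\xi(z)$, $z\in\C$, with $\xi(0)\in\R^m$, along which $H_\si(\xi(z))-\la$ is boundedly invertible for $|\im z|$ large (on a suitable sequence of $z$'s).

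For such $\xi$ I would write $H_\si(\xi)-\la=(H_0(\xi)-\la)+V+B_\si$, where $H_0(\xi)$ is the free operator ($V\equiv0$, $\si\equiv0$, i.e.\ the Neumann operator) and $B_\si=\ga^{*}\si\ga$, with $\ga$ the trace onto $\{0,a\}\times\O$. The transverse operator $-\dd_x^2$ on $[0,a]$ with Neumann conditions has eigenvalues $(\pi n/a)^2$ and eigenfunctions $\cos(\pi n x/a)$, so $H_0(\xi)$ is diagonalised by this basis together with the Fourier series in $y$; choosing the direction of complexification and the sequence $\im z\to\infty$ so as to avoid the resonances $|\eta+\xi|^2+(\pi n/a)^2=\la$ ($\eta$ in the dual lattice, $n\ge 0$), one gets $\|(H_0(\xi)-\la)^{-1}\|_{L_2\to L_2}\to 0$. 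To absorb $V$ I would quote the resolvent estimate from the proof of Theorem \ref{t1} in the case $k=1$: after a Birman--Schwinger-type factorization through $|H_0(\xi)-\la|^{-1/2}$, multiplication by $V$ gives an operator whose norm is bounded by $C\|V\|_{L_p([0,a]\times\O)}$, $p>d/2$, times a factor tending to $0$; this uses the spectral cluster estimates of \cite{So} for the one-dimensional transverse problem.

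The genuinely new ingredient is the boundary term. Since the traces at $x\in\{0,a\}$ of the transverse eigenfunctions $\cos(\pi n x/a)$ are $O(1)$, the operator $\ga\,(H_0(\xi)-\la)^{-1/2}$ maps $L_2([0,a]\times\O)$ into $L_2(\{0,a\}\times\O)$, and by the Sobolev trace embedding $H^1([0,a]\times\O)\hookrightarrow L_{2(d-1)/(d-2)}(\{0,a\}\times\O)$ on the $(d-1)$-dimensional boundary, together with H\"older's inequality, the weight $|\si|^{1/2}$ is controlled by $\|\si\|_{L_q}^{1/2}$ with $q$ as in \eqref{sigma} ($q=2d-2$ for $d\ge 4$, the borderline value $q=2=d-1$ for $d=3$), again with a factor tending to $0$ as $\im z\to\infty$. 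This is precisely the estimate obtained in \cite{Su}, so one invokes Theorem \ref{su} at this point (in particular, for $d=3$ and $d=4$ much of the statement is close to being contained in \cite{Su}, via the embedding $L_p\subset L_{d/2,\infty}^0$ for $p>d/2$). Combining the three estimates, the operator $I+|H_0(\xi)-\la|^{-1/2}(V+B_\si)|H_0(\xi)-\la|^{-1/2}$ is invertible for $|\im z|$ large, hence so is $H_\si(\xi)-\la$, and the Thomas argument yields absolute continuity.

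I expect the main obstacle to be the compatibility of the two perturbations under complexification: one has to choose a single direction of complexification and a single sequence $\im z\to\infty$ which simultaneously makes the bulk estimate work (this relies on the spectral-cluster machinery of \cite{So}, sensitive to the distribution of the transverse eigenvalues) and the boundary estimate of \cite{Su} work, and one must check that the form $h_\si(\xi)$ stays sectorial and closed along the complex family, with $V$ and $B_\si$ both infinitesimally form-bounded relative to $H_0(\xi)$, so that all the factorizations above are legitimate. Once the framework of Theorem \ref{t1} and of \cite{Su} is in place, what remains is essentially the careful assembling of these two ingredients.
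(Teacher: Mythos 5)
Your proposal follows essentially the same route as the paper: direct-integral Thomas reduction, the free-resolvent bound $\|H_0(\tau)^{-1}\|\le(2\pi|\tau|)^{-1}$ via the $(\pi+i\tau)b_1+\xi$ complexification, spectral-cluster control of $|H_0(\tau)|^{-1/2}$ on $[0,a]\times\T$ (which the paper obtains by reflecting to the boundaryless $S^1\times\T$, Theorem~\ref{prod_t}), the $L_p$-form estimate for $V$ (Lemma~\ref{auxl} together with Theorem~\ref{lq}), and the trace estimate of Theorem~\ref{su} for the boundary term. The only stylistic difference is that the paper assembles these by bounding the sesquilinear form $(H_\sigma(\tau)u,\Phi_0(\tau)u)$ from below with $v=\Phi_0(\tau)u$ rather than via the Birman--Schwinger factorization you sketch, but the substance is the same.
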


\begin{rem}
Theorem \ref{t1} can be reformulated in the matrix case.
Let $V$ be an $(n\times n)$-matrix-valued function on $\Xi$
such that $V(x, y)^* = V(x, y)$, \eqref{v} holds, and
$V \in L_p (M\times \O)$, $p > d/2$.
The quadratic form
$$
h [u,u] = \int \limits_{\Xi}\l(|\nabla u(x,y)|^2 
+ \< V(x,y) u(x,y)\,,\,u(x,y)\>\r)\,dx\,dy
$$
is closed and semi-bounded on the domains
$H^1(\Xi, \C^n)$ and $H^1_0(\Xi, \C^n)$.
These forms correspond to the self-adjoint operators
$H$, $H_N$, $H_D$ in $L_2 (\Xi, \C^n)$.
In the cases of a manifold without boundary, a layer, and 3- and 4-dimensional cylinders, the spectra of such operators are absolutely continuous. In the case of a $d$-dimensional cylinder, $d > 4$, 
the spectra of $H_N$ and $H_D$ are absolutely continuous whenever
$V \in L_p (M\times \O)$, $p > d-2$.
The proof of Theorem \ref{t1} is valid for the matrix case without changes. A matrix analog of Theorem \ref{t2} can also be obtained.
\end{rem}

It is convenient for us to interpret $\Omega$ as an 
$m$-dimensional torus $\T =\R^m / \Gamma$. 
Let us introduce an additional parameter $\xi \in \C^m$, and consider the following quadratic forms.
In the case of a manifold without boundary let
\begin{eqnarray}
\label{hnk}
h(\xi) [v,v] = \int\limits_{M\times \Omega} ( |\nabla_x v|^2
+ \langle (\nabla_y+i\xi)v,(\nabla_y+i\overline{\xi})v \rangle 
+ V |v|^2 )\, dx\,dy, \\ 
\nonumber
\dom h (\xi) = H^1 (M \times \T) . 
\end{eqnarray}
If $\dd M \neq \emptyset$, then the form \eqref{hnk} will be denoted by $h_N (\xi)$, and let also
$$
h_D (\xi) = h_N (\xi) \mid_{H^1_0 (M \times \T)}.
$$
In the case of a layer, $\Xi = [0, a] \times \R^m$, consider also a form
\begin{eqnarray*}
h_\si (\xi) [v,v] = h_N (\xi) [v,v] +
\int\limits_{\Omega} \l( \sigma(a,y)|v(a,y)|^2 - \sigma(0,y)|v(0,y)|^2 \r) dy,\\ 
\dom h_{\sigma} (\xi) = H^1([0, a] \times \T).
\end{eqnarray*}
These forms are sectorial (the definition and main properties of sectoriality can be found in \cite[Ch. VI, VII]{K}),
and they correspond to analytic operator families
$H(\xi)$, $H_N(\xi)$, $H_D(\xi)$, and $H_\si(\xi)$ respectively.
For real $\xi$, these operators are self-adjoint.

Let $b_1$ be the first vector in the basis of $\G$.
The conditions on the potential are dilatation-invariant, so we can assume $|b_1|=1$. 

\begin{theorem}
\label{t3}
Suppose the conditions of Theorem {\rm\ref{t1}} or Theorem {\rm\ref{t2}} are satisfied.
Then, for every $\lambda\in\C$ and $\xi \in \R^m$, $\xi \perp b_1$, 
there exists $\tau_0$ such that for $|\tau| > \tau_0$
the operator $\left( H ((\pi + i \tau) b_1 + \xi) - \la I \right)$
is invertible and
\begin{equation}
\label{18}
\left\| \left( H ((\pi + i \tau) b_1 + \xi) - \la I \right)^{-1}
\right\| \le C |\tau|^{-1}.
\end{equation}
\end{theorem}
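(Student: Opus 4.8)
The strategy follows the Thomas approach combined with the spectral cluster estimates announced in the introduction. Write $\xi(\tau) = (\pi + i\tau)b_1 + \xi$ with $\xi \perp b_1$, $|b_1| = 1$. The operator $H(\xi(\tau))$ decomposes as $H_0(\xi(\tau)) + V$, where $H_0$ is the free part. The first step is to obtain the resolvent estimate \eqref{18} for the free operator $H_0(\xi(\tau))$. Separating variables, $H_0(\xi(\tau)) = -\Delta_x \otimes I + I \otimes (-i\nabla_y + \re\xi(\tau))^2 + (\text{terms from }\im\xi)$; more precisely one expands $(\nabla_y + i\xi(\tau))\cdot(\nabla_y + i\overline{\xi(\tau)})$ and the $\tau$-dependence enters through the longitudinal Fourier mode. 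Diagonalizing in the $y$-variable via the Fourier series on $\T$ and in the $x$-variable via the eigenfunctions of $-\Delta_x$ (with the appropriate boundary condition), the free operator becomes, on the Fourier–eigenfunction side, multiplication by numbers of the form $\mu_j + |2\pi n + \re\xi|^2 - \tau^2 + 2i\tau(2\pi n_1 + \pi)$ where $\mu_j \ge 0$ are the Dirichlet/Neumann eigenvalues of $-\Delta_x$ on $M$ and $n \in \Z^m$. Since $n_1 \in \Z$, the imaginary part $2\tau(2\pi n_1 + \pi) = 2\tau\pi(2n_1+1)$ is bounded away from $0$ by $2\pi|\tau|$, which is the key point of choosing the shift $\pi b_1$: the spectrum of $H_0(\xi(\tau)) - \lambda$ stays at distance $\ge 2\pi|\tau| - |\im\lambda|$ from $0$, giving $\|(H_0(\xi(\tau)) - \lambda)^{-1}\| \le C|\tau|^{-1}$ for $|\tau|$ large.

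The second, and main, step is to control the perturbation $V$ relative to $H_0(\xi(\tau))$. Since $V$ is only in $L_p$ with $p$ close to $d/2$ (or $d-2$), it is not $H_0$-bounded with small bound in any naive sense, and one cannot simply use a Neumann series with the operator-norm bound above. Instead one must bound $V(H_0(\xi(\tau)) - \lambda)^{-1}$ in operator norm and show it is $< 1/2$ for large $|\tau|$. The plan is to factor $\|V(H_0(\xi(\tau)) - \lambda)^{-1}u\|_{L_2} \le \|V\|_{L_p}\,\|(H_0(\xi(\tau)) - \lambda)^{-1}u\|_{L_{2p'}}$ by Hölder, where $1/p' = 1 - 1/p$, i.e.\ $2p' = 2p/(p-1)$, and then invoke an $L_2 \to L_q$ smoothing bound for $(H_0(\xi(\tau)) - \lambda)^{-1}$ with the appropriate $q$ determined by $p$. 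This is exactly where the spectral cluster estimates of Sogge enter: the resolvent of $-\Delta_x$ on the compact manifold $M$, summed against the dyadic spectral clusters $\{\mu_j \sim 4^\ell\}$, obeys $L_2 \to L_q$ bounds, and combined with the corresponding (classical, via the $y$-Fourier series and Stein–Tomas/Kenig–Ruiz–Sogge-type) estimates for the Euclidean factor, one gets a bound of the form $\|(H_0(\xi(\tau)) - \lambda)^{-1}\|_{L_2 \to L_q} \le C|\tau|^{-\delta}$ with $\delta > 0$, uniformly in the cluster index after summation. The exponent bookkeeping — matching the range of $q$ coming from Sogge's estimates on $M^k$ with the range of $p > d/2$ (resp.\ $p > \max(d/2, d-2)$) and the need $\delta > 0$ — is precisely what produces the dichotomy in the hypotheses of Theorem \ref{t1} (the restriction $p > d-2$ when $\partial M \ne \emptyset$, $k \ge 2$, $d \ge 5$), and the case $k = 1$ is handled separately via Suslina's layer estimate (Theorem \ref{su}).

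Granting the two bounds — $\|(H_0(\xi(\tau)) - \lambda)^{-1}\|_{L_2 \to L_2} \le C|\tau|^{-1}$ and $\|(H_0(\xi(\tau)) - \lambda)^{-1}\|_{L_2 \to L_{2p'}} \le C|\tau|^{-\delta}$ — the conclusion is routine. For $|\tau| > \tau_0$ with $\tau_0$ large enough, $\|V(H_0(\xi(\tau)) - \lambda)^{-1}\|_{L_2 \to L_2} \le \|V\|_{L_p}\,C|\tau|^{-\delta} < 1/2$, so $I + V(H_0(\xi(\tau)) - \lambda)^{-1}$ is invertible by Neumann series, hence $H(\xi(\tau)) - \lambda = (I + V(H_0(\xi(\tau)) - \lambda)^{-1})(H_0(\xi(\tau)) - \lambda)$ is invertible, and
\[
\left\|(H(\xi(\tau)) - \lambda)^{-1}\right\| \le \left\|(H_0(\xi(\tau)) - \lambda)^{-1}\right\| \cdot \left\|\left(I + V(H_0(\xi(\tau)) - \lambda)^{-1}\right)^{-1}\right\| \le 2C|\tau|^{-1},
\]
which is \eqref{18}. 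I expect the main obstacle to be the second step: establishing the $L_2 \to L_q$ smoothing estimate for the free resolvent with a genuine decaying power of $|\tau|$ and uniformly over all spectral clusters of $-\Delta_x$, which requires carefully combining Sogge's cluster estimates on $M$ with sharp Euclidean resolvent bounds on the torus factor and summing the dyadic pieces without loss — this is where the precise form of the exponent conditions is forced.
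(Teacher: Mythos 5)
Your free-resolvent step is fine and matches the paper: with the shift by $\pi b_1$ and $|b_1|=1$, every eigenvalue $h_{j,n}(\tau)$ of $H_0(\tau)$ has $|\im h_{j,n}(\tau)| = 2|\tau|\,|\langle n+\pi b_1,b_1\rangle| \ge 2\pi|\tau|$, so $\|H_0(\tau)^{-1}\| \le (2\pi|\tau|)^{-1}$. The gap is in the perturbative step, and it is not fixable within the Neumann-series framework you propose.

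Your H\"older inequality is wrong. To bound $\|V g\|_{L_2}$ by $\|V\|_{L_p}\|g\|_{L_t}$ you need $\tfrac12 = \tfrac1p + \tfrac1t$, i.e. $t = \frac{2p}{p-2}$, not $t = 2p' = \frac{2p}{p-1}$ as you wrote (the product $Vg$ with $V\in L_p$, $g\in L_{2p'}$ lands only in $L_{2p/(p+1)}\subsetneq L_2$). This is not a cosmetic slip: as $p\downarrow d/2$ the correct exponent $\frac{2p}{p-2}$ tends to $\frac{2d}{d-4}$, which for $d\ge 5$ exceeds the upper limit $\frac{2d}{d-2}$ of the range where the Sogge cluster bounds give any gain $k^{1/2-\varepsilon}$ (for $d=3,4$ the situation is even worse since $p$ may be $\le 2$ and $\frac{2p}{p-2}$ is undefined). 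So the required $L_2\to L_{2p/(p-2)}$ resolvent smoothing is simply not available, and $\|V(H_0(\tau)-\lambda)^{-1}\|_{L_2\to L_2}$ cannot be made small this way. The paper's way around this is to put half of the smoothing on each factor: it works with the sesquilinear form $(H(\tau)u,v)$ where $v=\Phi_0(\tau)u$ ($\Phi_0$ the partial isometry from the polar decomposition $H_0(\tau)=\Phi_0(\tau)|H_0(\tau)|$), so that $(H_0(\tau)u,v)=\||H_0(\tau)|^{1/2}u\|^2\ge 2\pi|\tau|$, while the trilinear H\"older $\int|Vuv|\le \delta\|u\|_{L_{2p'}}\|v\|_{L_{2p'}} + c(\delta)$ (Lemma \ref{auxl}) only needs $\frac1p+\frac1{2p'}+\frac1{2p'}=1$, i.e. the exponent $2p'=\frac{2p}{p-1}$, which stays strictly below $\frac{2d}{d-2}$ whenever $p>d/2$. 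Combined with the $\tau$-uniform bound $\||H_0(\tau)|^{-1/2}\|_{L_2\to L_{2p'}}\le C$ (Theorem \ref{lq}, itself proved from Condition $\mathrm{A}(q)$ via an Abel-type summation over spectral clusters and Lemma \ref{lem}), one gets $|(Vu,v)|\le C\delta(H_0(\tau)u,v)+c(\delta)$, and the invertibility of $H(\tau)-\lambda I$ together with \eqref{18} follows by choosing $\delta$ small. This square-root splitting is exactly what makes the exponent count close; without it the argument is stuck.

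Two smaller inaccuracies. First, the cluster estimates are invoked for the full $d$-dimensional manifold $M\times\T$ (Corollary \ref{sl22} when $\dd M=\emptyset$, Theorem \ref{prod_t} when $k=1$, Theorem \ref{mainit2} when $\dd M\ne\emptyset$, $k\ge 2$), not as a tensor product of a Sogge bound on $M$ with a separate Euclidean bound on $\T$; in particular the $k=1$ case with Dirichlet or Neumann conditions is reduced to a boundaryless manifold $S^1\times N_0$ by even/odd reflection in Theorem \ref{prod_t}. Second, Suslina's estimate (Theorem \ref{su}) is not used for the layer with Dirichlet/Neumann conditions; it is used only to absorb the boundary term of the third-type form $h_\sigma$, which is a separate issue from the bulk potential $V$.
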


We prove this Theorem in \S 4.
In a standard way (see, for example, \cite{BSu} or \cite{RS})
Theorem \ref{t3} implies Theorems \ref{t1} and \ref{t2}.

\section{Spectral cluster estimates}

For a self-adjoint operator $P$, we denote by 
$E_k (P) = E_P ([(k-1)^2;k^2))$ its spectral projector onto a subspace, 
corresponding to an interval $[(k-1)^2;k^2)$. 
The following Theorem is proved in \cite{So}.

\begin{theorem}
\label{mainit}
Let $N$ be a compact $C^{\infty}$-smooth d-dimensional Riemannian 
manifold without boundary, let $P$ be an elliptic second-order 
differential operator on $N$ with positive-definite symbol. Then 
$$
\| E_k (P) f \|_{L_2(N)} \leqslant 
C k^{d(1/p - 1/2) - 1/2} \| f \|_{L_p(N)},
\qquad f\in L_p(N), \quad 1 \leqslant p \leqslant \frac{2(d+1)}{d+3} .
$$
\end{theorem}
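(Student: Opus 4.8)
The plan is to follow Sogge's route via the half-wave propagator, a parametrix, and an oscillatory-integral (restriction-type) estimate. Write $\sigma=d(1/p-1/2)-1/2$ throughout; we may assume $P\ge 1$ (adding a constant changes nothing for large $k$, while $k=1$ is trivial since $E_1(P)$ has finite rank). Since $E_k(P)=E_{\sqrt P}([k-1,k))$, fix a real $\rho\in\mathcal S(\R)$ with $\rho\ge c_1>0$ on $[-1,0]$ and $\widehat\rho$ supported in $(-c_0,c_0)$, where $c_0$ is a small fraction of the injectivity radius of $N$ for the metric given by the principal symbol of $P$. By the spectral theorem, $\|E_k(P)f\|_{L_2(N)}\le c_1^{-1}\|\rho(\sqrt P-k)f\|_{L_2(N)}$, so it suffices to show $\|\rho(\sqrt P-k)\|_{L_p(N)\to L_2(N)}\le Ck^\sigma$. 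Since $\rho$ is real, $\rho(\sqrt P-k)$ is self-adjoint, and by $TT^*$ this is equivalent to $\|\rho^2(\sqrt P-k)\|_{L_p(N)\to L_{p'}(N)}\le Ck^{2\sigma}$, where $\rho^2\in\mathcal S(\R)$ still has $\widehat{\rho^2}$ supported near $0$. Thus the problem reduces to an $L_p\to L_{p'}$ kernel estimate for a smooth spectral cutoff of width $O(1)$ centred at $k$.

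Next, write $\rho^2(\sqrt P-k)=\tfrac1{2\pi}\int_{\R}\widehat{\rho^2}(t)\,e^{ikt}\,e^{-it\sqrt P}\,dt$. Because $\widehat{\rho^2}$ is supported near $0$, only the short-time propagator $e^{-it\sqrt P}$ enters, and by finite propagation speed the kernel $\mathcal K_k(x,y)$ of $\rho^2(\sqrt P-k)$ is $O(k^{-\infty})$ once the geodesic distance $r_g(x,y)$ exceeds $2c_0$. For $r_g(x,y)\le 2c_0$ we use H\"ormander's parametrix for $e^{-it\sqrt P}$: in local coordinates it is, modulo a smoothing operator, a finite sum of oscillatory integrals whose phases solve the eikonal equation for the principal symbol $p_2$ and whose amplitudes are classical symbols of order $0$ supported near the diagonal. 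Carrying out the $t$-integration localises the frequency variable to the shell $\{|\eta|\sim k\}$ of thickness $O(1)$; rescaling $\eta=k\omega$ and applying stationary phase on the cosphere $\{p_2(x,\omega)=1\}$, which has nonvanishing Gaussian curvature because $p_2$ is positive-definite, yields
$$\mathcal K_k(x,y)=k^{(d-1)/2}\sum_{\pm}e^{\pm i k\,r_g(x,y)}a_k^{\pm}(x,y)+R_k(x,y),\qquad r_g(x,y)\le 2c_0,$$
with $|a_k^{\pm}(x,y)|\le C\,(1+k\,r_g(x,y))^{-(d-1)/2}$ (and corresponding control of derivatives) and $|R_k|\le C_N k^{-N}$; near the diagonal, $r_g(x,y)\lesssim 1/k$, one uses instead the crude bound $|\mathcal K_k(x,y)|\le Ck^{d-1}$.

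It remains to estimate the $L_p\to L_{p'}$ norm of the operator with kernel $\mathcal K_k$. The part supported where $r_g(x,y)\lesssim 1/k$ has $L_1\to L_\infty$ norm $\lesssim k^{d-1}$ and, by the Schur test, $L_2\to L_2$ norm $\lesssim k^{-1}$, so interpolation gives the required bound $Ck^{2\sigma}$. For the oscillatory part $k^{(d-1)/2}e^{\pm i k\,r_g}a_k^{\pm}$, the point is that for short geodesics the distance function satisfies the Carleson--Sj\"olin nondegeneracy and curvature condition --- equivalently, the canonical relation of $e^{-it\sqrt P}$ is a nondegenerate local canonical graph for small $|t|$, again by the curvature of the cospheres. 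This operator is thus a variable-coefficient analogue of the thin-annulus Stein--Tomas extension operator, and the Carleson--Sj\"olin--H\"ormander--Stein oscillatory-integral-operator estimate (a curved version of the Stein--Tomas restriction theorem) furnishes the bound $\lesssim k^{2\sigma}$ precisely when $1\le p\le\frac{2(d+1)}{d+3}$, i.e. $p'\ge\frac{2(d+1)}{d-1}$, which is exactly the Stein--Tomas range; for smaller $p$ the pointwise bound $|\mathcal K_k(x,y)|\lesssim k^{d-1}(1+k\,r_g(x,y))^{-(d-1)/2}$ and Young's inequality already suffice. Together with the reductions of the first paragraph this proves the theorem. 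The hard part is this last oscillatory-integral estimate and the verification of the Carleson--Sj\"olin condition for the wave phase --- this is the only deep input; the parametrix construction and the stationary-phase computations, though lengthy, are routine.
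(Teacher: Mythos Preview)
The paper does not give its own proof of this statement: it is quoted verbatim as a known result and attributed to Sogge \cite{So} (``The following Theorem is proved in \cite{So}''). Your proposal is a faithful sketch of Sogge's original argument --- reduction to a smoothed spectral projector $\rho(\sqrt{P}-k)$, the $TT^*$ step, the short-time parametrix for $e^{-it\sqrt{P}}$, and finally the Carleson--Sj\"olin/H\"ormander oscillatory-integral estimate in the Stein--Tomas range --- so there is nothing in the paper to compare it against. As a sketch of Sogge's proof it is accurate; just be aware that in the present paper this theorem is used as a black box, and no argument for it is supplied or expected.
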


By duality, this yields

\begin{cor}
\label{sl22}
Under the assumptions of Theorem {\rm\ref{mainit}}, the following inequality holds:
\beq
\label{maini}
\|E_k (P) f\|_{L_q(N)} \leqslant C k^{d(1/2-1/q)-1/2} \|f\|_{L_2(N)},
\qquad f\in L_2(N), \quad \frac{2(d+1)}{d-1}\leqslant q\leqslant +\infty.
\eeq
\end{cor}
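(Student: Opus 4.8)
The plan is to derive Corollary~\ref{sl22} from Theorem~\ref{mainit} by a straightforward duality argument, keeping careful track of how the exponents and the operator $E_k(P)$ behave under taking adjoints. First I would note that $E_k(P)$ is an orthogonal projection in $L_2(N)$, hence self-adjoint and bounded on $L_2$; in particular $E_k(P) = E_k(P)^*$ as operators on $L_2(N)$, and $E_k(P) = E_k(P)^2$. The estimate of Theorem~\ref{mainit}, namely $\|E_k(P) f\|_{L_2(N)} \le C k^{\alpha(p)} \|f\|_{L_p(N)}$ with $\alpha(p) = d(1/p - 1/2) - 1/2$, says precisely that $E_k(P)$ extends to a bounded operator $L_p(N) \to L_2(N)$ with norm $\le C k^{\alpha(p)}$, for $1 \le p \le 2(d+1)/(d+3)$.

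Next I would take adjoints. Since $N$ is a compact manifold, $L_2(N) \subset L_p(N)$ for $p \le 2$ (all the relevant $p$ here are $\le 2$), so the composition is well-defined and the adjoint of $E_k(P)\colon L_p \to L_2$ is a bounded operator $L_2 = (L_2)^* \to (L_p)^* = L_q$, where $q = p/(p-1)$ is the conjugate exponent, with the same norm. Because $E_k(P)$ is symmetric on the dense subspace $L_2 \cap L_p = L_2$ and agrees there with its $L_p \to L_2$ extension, this adjoint operator is again (the extension of) $E_k(P)$, now acting $L_2(N) \to L_q(N)$. Thus $\|E_k(P) f\|_{L_q(N)} \le C k^{\alpha(p)} \|f\|_{L_2(N)}$ for all $f \in L_2(N)$. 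The only remaining task is bookkeeping: as $p$ ranges over $[1, 2(d+1)/(d+3)]$, the conjugate $q = p/(p-1)$ ranges over $[2(d+1)/(d-1), +\infty]$, and substituting $1/p = 1 - 1/q$ into $\alpha(p)$ gives $\alpha = d(1/2 - 1/q) - 1/2$, which is exactly the exponent claimed in \eqref{maini}.

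I do not expect a genuine obstacle here; the argument is routine functional-analytic duality. The only point requiring a word of care is the identification of the adjoint of $E_k(P)\colon L_p \to L_2$ with $E_k(P)\colon L_2 \to L_q$ rather than with some a priori different extension: this is justified by the fact that $E_k(P)$ is self-adjoint on $L_2$ and that $L_2$ is dense in both $L_p$ (for $p \le 2$) and in the predual $L_{q'} = L_p$ of $L_q$, so the two bounded extensions, being the unique continuous extensions of the same densely-defined symmetric map, must coincide on the overlap and more generally one is the Banach-space adjoint of the other. One should also check the endpoint $q = \infty$ (dual to $p = 1$) causes no trouble, since $L_\infty(N) = (L_1(N))^*$ on the finite-measure space $N$ and the duality pairing argument goes through verbatim.
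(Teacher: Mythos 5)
Your proposal is correct and is exactly the route the paper takes: the paper's entire proof consists of the single phrase ``By duality, this yields,'' and your argument supplies the standard details (self-adjointness of $E_k(P)$ on $L_2$, passage to the Banach-space adjoint $L_2\to L_q$, and the exponent bookkeeping $1/p=1-1/q$). Nothing is missing and nothing differs in substance.
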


\begin{theorem}
\label{prod_t}
Let $N_0$ be a compact smooth Riemannian manifold without boundary,
$\dim N_0 = d-1$. Let $P_0$ be a second-order elliptic differential operator on 
$N_0$ with positive-definite symbol.
Consider an elliptic operator
$P=1\otimes P_0-\frac{d^2}{dx^2}\otimes 1$ 
on a manifold $N=[0,a]\times N_0$
{\rm(}$x$ denotes a local coordinate on $[0,a]${\rm)}.
Then, for $P$ on $N$ with either Dirichlet or Neumann boundary conditions, the estimate \eqref{maini} holds.
\end{theorem}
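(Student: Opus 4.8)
The plan is to reduce the estimate on the cylinder $N=[0,a]\times N_0$ to the boundaryless estimate of Corollary \ref{sl22} by the classical doubling/reflection trick, which works precisely because the transverse operator $-d^2/dx^2$ is one-dimensional and the two standard boundary conditions correspond to even/odd reflections. First I would introduce the double $\widetilde{N}=(\R/2a\Z)\times N_0$, a compact smooth boundaryless manifold of dimension $d-1$, and on it the operator $\widetilde P=1\otimes P_0-d^2/dx^2\otimes 1$, which is elliptic with positive-definite symbol; Corollary \ref{sl22} applies to $\widetilde P$ and gives $\|E_k(\widetilde P)f\|_{L_q(\widetilde N)}\le Ck^{(d-1)(1/2-1/q)-1/2}\|f\|_{L_2(\widetilde N)}$ for $q\ge 2d/(d-2)$ — note that here the ambient dimension is $d-1$, so the exponent matches \eqref{maini} with $d$ replaced by $d-1$, which is exactly what the statement claims.

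Next I would set up the reflection isometries. For the Neumann case, the even extension $E_{\mathrm{ev}}\colon L_2([0,a]\times N_0)\to L_2(\widetilde N)$ defined by $(E_{\mathrm{ev}}f)(x,\cdot)=f(x,\cdot)$ for $x\in[0,a]$ and $=f(-x,\cdot)=f(2a-x,\cdot)$ for $x\in[-a,0]$ is, up to the factor $\sqrt2$, an isometric embedding onto the subspace of functions even about $0$ and about $a$; for the Dirichlet case one uses the odd extension $E_{\mathrm{od}}$. The key algebraic fact is that these extensions intertwine the boundary-value operator $P$ (with Neumann, resp.\ Dirichlet, conditions) with $\widetilde P$: a smooth even function on the circle restricts to a smooth function on $[0,a]$ with vanishing normal derivative at the endpoints, and $\widetilde P$ preserves the even subspace because $-d^2/dx^2$ and $P_0$ do. Hence $E_{\mathrm{ev}}$ maps the form domain and spectral subspaces of Neumann-$P$ onto those of $\widetilde P$ restricted to the even subspace, so $E_{\mathrm{ev}}\,E_k(P)f=E_k(\widetilde P)\,E_{\mathrm{ev}}f$ for all $f$; likewise with $E_{\mathrm{od}}$ in the Dirichlet case. (One should check that the spectral intervals $[(k-1)^2,k^2)$ used to define $E_k$ are the same for both operators, which is immediate since $\widetilde P$ restricted to the even, resp.\ odd, subspace is unitarily equivalent to Neumann-$P$, resp.\ Dirichlet-$P$.)

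With the intertwining in hand the estimate is immediate: for $f\in L_2([0,a]\times N_0)$,
\[
\|E_k(P)f\|_{L_q([0,a]\times N_0)}
\le \|E_{\mathrm{ev}}E_k(P)f\|_{L_q(\widetilde N)}
= \|E_k(\widetilde P)E_{\mathrm{ev}}f\|_{L_q(\widetilde N)}
\le Ck^{(d-1)(1/2-1/q)-1/2}\|E_{\mathrm{ev}}f\|_{L_2(\widetilde N)},
\]
and $\|E_{\mathrm{ev}}f\|_{L_2(\widetilde N)}=\sqrt2\,\|f\|_{L_2([0,a]\times N_0)}$, which yields \eqref{maini} with $d$ replaced by $d-1$ as required; the Dirichlet case is identical with $E_{\mathrm{od}}$. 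The main obstacle, and the only point requiring genuine care, is the smoothness of the doubled manifold and operator at the gluing hypersurfaces $\{x=0\}$ and $\{x=a\}$: one must verify that $\widetilde N$ is a genuinely $C^\infty$ manifold and that $\widetilde P$ has $C^\infty$ coefficients there. This is where the product structure $P=1\otimes P_0-d^2/dx^2\otimes 1$ is essential — the coefficients of $\widetilde P$ in the $x$-variable are constant, so no smoothness is lost in the reflection, and the $N_0$-part $P_0$ is untouched by the doubling. If the transverse operator had $x$-dependent coefficients this step would fail, which is why the theorem is stated only for operators of this special product form.
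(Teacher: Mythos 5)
Your proof is correct in its essential idea and follows the same route as the paper: reflect the cylinder $N=[0,a]\times N_0$ to the closed manifold $S^1\times N_0$ and apply Corollary \ref{sl22} there. The presentations differ slightly. The paper writes out the kernel of $E_k$ explicitly (after normalizing $a=\pi$), extends it by zero to $[0,2\pi]\times N_0$ so that $E_k=S\widetilde E_k T$, and then splits $\widetilde E_k=\frac{1}{2\pi}(\widetilde E_k^{(1)}\mp\widetilde E_k^{(2)})$, where $\widetilde E_k^{(1)}$ is the spectral projector of the periodic operator on $S^1\times N_0$ and $\widetilde E_k^{(2)}$ is $\widetilde E_k^{(1)}$ composed with an $L_2$- and $L_q$-isometric reflection. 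You instead use the even/odd extension operators $E_{\mathrm{ev}}$, $E_{\mathrm{od}}$ and the intertwining $E_{\mathrm{ev/od}}\,E_k(P)=E_k(\widetilde P)\,E_{\mathrm{ev/od}}$, which is cleaner conceptually and avoids the explicit kernel computation. Both presentations ultimately reduce to the same application of Corollary \ref{sl22}, and both correctly locate the crucial structural hypothesis (the product form $P=1\otimes P_0-d^2/dx^2\otimes1$ ensures that $\widetilde P$ has smooth coefficients across the gluing hypersurfaces).

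There is, however, a dimension-bookkeeping error that you should fix. The doubled manifold $\widetilde N=(\R/2a\Z)\times N_0$ has dimension $1+(d-1)=d$, not $d-1$ as you wrote. Consequently Corollary \ref{sl22} gives $\|E_k(\widetilde P)f\|_{L_q(\widetilde N)}\le Ck^{d(1/2-1/q)-1/2}\|f\|_{L_2(\widetilde N)}$ for $q\ge 2(d+1)/(d-1)$, not the exponent $(d-1)(1/2-1/q)-1/2$ with the range $q\ge 2d/(d-2)$ that you stated. The estimate \eqref{maini} you are asked to prove on $N$ has exactly the exponent with $d=\dim N$, so after correcting the dimension everything matches and the proof goes through. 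This is a mechanical slip, not a flaw in the method, but as written the exponent and the $q$-range you obtain are both incorrect.
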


\begin{proof}
We shal give proof for the Dirichlet problem, the Neumann case is analogous. The 
statement of Theorem is invariant with respect to dilatations over $x$, so we 
can assume $a=\pi$. In this case, the spectral projector $E_k$ of $P$ is an integral operator
with kernel
\beq
\label{kker}
K(x,x',y,y')=\sum\limits_{j^2+\lambda_n\in [(k-1)^2;k^2)}\frac{2}{\pi}\sin(jx)\sin(jx')\varphi_n(y)\overline{\varphi}_n(y'),
\eeq
where $\{\lambda_n\}$, $\{\varphi_n\}$ are eigenvalues and eigenfunctions
of $P_0$. 
We introduce three operators: an operator $\widetilde{E}_k$, 
acting on functions from $L_2([0,2\pi]\times N_0)$ 
as an integral operator with the same kernel \eqref{kker},
an operator of zero extension
$T\colon L_2(N)\to L_2([0,2\pi]\times N_0)$,
and a restriction operator $S\colon L_{q}([0,2\pi]\times N_0)\to L_{q}(N)$.
Obviously, $E_k=S\widetilde{E}_k T$. 
Furthermore, 
$\widetilde{E}_k=\frac{1}{2\pi}(\widetilde{E}_k^{(1)}-\widetilde{E}_k^{(2)})$, 
where $\widetilde{E}_k^{(1)}$ and $\widetilde{E}_k^{(2)}$ 
are integral operators with kernels
\begin{eqnarray*}
K^{(1)}(x,x',y,y')=\sum\limits_{j^2+\lambda_n\in [(k-1)^2;k^2)}(e^{ij(x-x')}+e^{-ij(x-x')})\varphi_n(y)\overline{\varphi}_n(y'), \\
K^{(2)}(x,x',y,y')=\sum\limits_{j^2+\lambda_n\in [(k-1)^2;k^2)}(e^{ij(x+x')}+e^{-ij(x+x')})\varphi_n(y)\overline{\varphi}_n(y').
\end{eqnarray*}
The operator $\widetilde{E}_k^{(1)}$ is a spectral projector of $-\frac{d^2}{dx^2}\otimes 1+1\otimes P_0$ 
on $[0,2\pi]\times N_0$ with periodic boundary conditions over $x$. 
The last operator is an elliptic operator on a manifold $S^1\times N_0$ without boundary, 
and it satisfies \eqref{maini}. 
Similarly, \eqref{maini} holds for $\widetilde{E}_k^{(2)}$, 
and so for $\widetilde{E}_k$ and $E_k$.

The proof for the Neumann case can be obtained by replacing $\sin(jx)$ with $\cos(jx)$, in this case $\widetilde{E}_k=\frac{1}{2\pi}(\widetilde{E}_k^{(1)}+\widetilde{E}_k^{(2)})$.
\end{proof}
In \cite{SmSo}, the following result is proved.

\begin{theorem}
\label{mainit2}
Let $N$ be a compact smooth Riemannian manifold with boundary, $\dim N=d\geqslant 3$. 
Let $P$ be an elliptic second-order differential operator on $N$ with positive-definite symbol and with
Dirichlet or Neumann boundary conditions. Then, for
\beq
\label{star2}
5\leqslant q\leqslant\infty\text{, if } d=3 ; \quad
4\leqslant q\leqslant \infty\text{, if } d\geqslant 4 ,
\eeq
the estimate \eqref{maini} holds. For
$$
2\leqslant q\leqslant 4, \qquad d \geqslant 4,
$$
the estimate is replaced with
\beq
\label{q24}
\|E_k f\|_{L_{q}(N)}\leqslant 
Ck^{d(1/2-1/q)+2/q-1}\|f\|_{L_2(N)} .
\eeq
\end{theorem}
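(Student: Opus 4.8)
The plan is to follow the method of Smith and Sogge, reducing the cluster bound to fixed-time estimates for the wave group and then analyzing that group separately on the part of the half-wave flow that avoids $\dd N$, the part that meets it transversally, and the glancing part. Since $E_k=E_k^*=E_k^2$, the inequality $\|E_k f\|_{L_q(N)}\le Ck^{\si(q)}\|f\|_{L_2(N)}$ is equivalent to an $L_{q'}(N)\to L_q(N)$ bound on $E_k$, and by a standard reduction it suffices to prove such a bound for a smoothed cluster operator $\chi(k-\sqrt{P})$, where $\chi\in\mathcal{S}(\R)$ has $\hat\chi$ supported in a small interval $(-\eps,\eps)$ and $|\chi|\ge 1$ on $[0,1]$. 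Writing $\chi(k-\sqrt{P})=\frac{1}{2\pi}\int\hat\chi(t)\,e^{ikt}\,e^{-it\sqrt{P}}\,dt$ and using finite propagation speed for the Dirichlet/Neumann wave equation --- the Schwartz kernel of $e^{-it\sqrt{P}}$ is supported in $\{d(z,w)\le|t|\}$, the waves being reflected but not broken by $\dd N$ --- one confines the analysis, for $\eps$ small, to a single coordinate chart and splits the propagator according to whether the relevant bicharacteristics meet $\dd N$ during $|t|\le\eps$.

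On the microlocal region whose bicharacteristics stay a fixed distance from $\dd N$ on $[-\eps,\eps]$, the H\"ormander--Lax parametrix for $e^{-it\sqrt{P}}$ is available exactly as in the closed case, and the resulting oscillatory integral operators are estimated by the Stein--Tomas/Sogge argument, giving the interior exponents $d(1/2-1/q)-1/2$ for $q\ge\frac{2(d+1)}{d-1}$ and $\frac{d-1}{2}(1/2-1/q)$ for $2\le q\le\frac{2(d+1)}{d-1}$; equivalently, this contribution is absorbed into the bound of Theorem \ref{mainit} / Corollary \ref{sl22} applied to a closed extension of $(N,P)$. Near a point of $\dd N$ one separates incident directions into a transversal part and a glancing part. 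For the transversal part, a local reflection across the flattened boundary --- in the spirit of the proof of Theorem \ref{prod_t}, now performed only locally --- reduces matters to a parametrix for a wave operator that is smooth on either side of a hypersurface, which is harmless and again delivers the interior bounds.

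The genuinely new phenomenon, and the main obstacle, is the glancing (gliding) region, where bicharacteristics are tangent to $\dd N$. There one brings $P$ to its Friedlander normal form near the glancing set (morally $D_x^2+xD_y^2$) and uses the Melrose--Taylor parametrix to represent the corresponding part of $e^{-it\sqrt{P}}$ as an oscillatory integral whose phase degenerates in an Airy-type fashion, an Airy function carrying the diffractive behaviour. One then decomposes dyadically in the distance to $\dd N$ and in the grazing angle, bounds each dyadic block as an $L_2\to L_q$ operator, and sums. Because gliding rays defocus less efficiently than interior geodesics, each block satisfies an estimate that degrades, relative to the interior case, by a power of the dyadic parameter; summing the geometric series produces precisely the modified exponent $d(1/2-1/q)+2/q-1=(d-2)(1/2-1/q)$ for $2\le q\le 4$ (when $d\ge 4$), and isolates the threshold $q=4$ --- resp.\ $q=5$ for $d=3$ --- past which the glancing term is dominated by the interior one and the full exponent $d(1/2-1/q)-1/2$ is recovered. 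The delicate points are the uniformity of the Melrose--Taylor parametrix up to the glancing set, the treatment of transversal reflections, and the bookkeeping in the dyadic sum that pins down the critical exponents.
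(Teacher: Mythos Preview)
The paper does not prove this theorem at all: it is quoted verbatim from Smith--Sogge \cite{SmSo}, with the line ``In \cite{SmSo}, the following result is proved'' immediately preceding the statement and no proof following it. So there is nothing in the paper to compare your argument against.

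What you have written is a reasonable high-level outline of the Smith--Sogge strategy --- reduction to a smoothed cluster via the half-wave group, finite propagation speed to localize, separation into interior, transversally reflecting, and glancing regions, and the Melrose--Taylor/Airy-type analysis of the glancing part that produces the exponent loss for $2\le q\le 4$. As a sketch of where the result comes from it is accurate in spirit; as an actual proof it is of course far from self-contained, since the hard analysis (the uniform parametrix near the glancing set and the dyadic summation that yields the precise exponents and thresholds $q=4$, $q=5$) is exactly the content of \cite{SmSo} and cannot be reproduced in a paragraph. For the purposes of this paper, citing \cite{SmSo} is all that is needed.
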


\section{Proof of Theorem \ref{t3}}

For simplicity, denote
$H ((\pi + i \tau) b_1 + \xi)$
by $H(\tau)$. 
Let
$$
H_0(\tau)=\left.H(\tau)\right|_{V=0\mbox{, }\sigma=0}, \quad 
H_0=H_0(0) .
$$ 
The operator $H_0$ is a self-adjoint second-order elliptic differential operator 
on a manifold $M \times \T$. Let $E_k$ denote its spectral projector onto $[(k-1)^2;k^2)$. 
For a manifold $M$, we introduce

\vspace{4pt}
\noindent {\bf Condition} ${\mathrm{A}(q)}$. 
{\it M satisfies the property that for every
$\xi \in 
\R^m$,
$\<\xi,b_1\>=0$, there exist $\er>0$ and $C>0$ such that}
$$
\|E_k f\|_{L_{q}(M \times \T)}\leqslant 
C k^{1/2-\er} \|f\|_{L_2(M \times \T)}, \quad \forall f\in L_2(M\times\T).
$$
It is easy to see that ${\mathrm{A}(q)}$ implies $A(\tilde{q})$ if $\tilde{q}<q$. 

Let $\{\mu_j\}$ and $\{\varphi_j(x)\}$ be eigenvalues and eigenfunctions of the Laplace 
operator $-\Delta_x$ on $M$ with the corresponding (Dirichlet or Neumann) boundary conditions.
Then the eigenvalues of
$H_0(\tau)$ are of the form
$$
h_{j,n} (\tau) = |n + \pi b_1 + \xi|^2 + \mu_j - \tau^2 +
 2 i \tau \< n + \pi b_1, b_1 \> ,
$$
and the normalized eigenfunctions are
$$
\varphi_{j,n}(x,y)=|\Omega|^{-1/2}\varphi_j(x) e^{i\<n,y\>}, 
\quad j\in\N,\sk n\in\widetilde{\Gamma},
$$
where $\widetilde{\Gamma}$ is the dual lattice,
\begin{equation*}
\tilde \G =
 \left\{ n = \sum_{j=1}^m n_j \tilde b_j, \ n_j \in \Z \right\},
 \quad \< b_k, \tilde b_j \> = 2 \pi \de_{kj} .
\end{equation*}
Notice that $\<n,b_1\>\in 2\pi\Z$. This gives 
$$
|h_{j,n}(\tau)|\geqslant |\im h_{j,n}(\tau)|
= 2|\<n+\pi b_1,b_1\>||\tau|\geqslant 2\pi|\tau|.
$$  
Then, for $|\tau|>0$, the operator $H_0(\tau)$ is invertible and
\beq
\label{easy}
\|H_0(\tau)^{-1}\|\leqslant (2\pi|\tau|)^{-1},\quad \tau\neq 0.
\eeq
Consider also an operator $|H_0(\tau)|^{-1/2}$ such that
$$
|H_0(\tau)|^{-1/2} \varphi_{j,n} = |h_{j,n}(\tau)|^{-1/2} \varphi_{j,n}.
$$
The following Lemma is elementary.
\begin{lemma}
\label{lem}
Let $0<\eps<1/2$. Then the sums
\beq
\label{ms}
\sum_{k=1}^{\infty} \frac{k^{1-2\er}}{|k^2 - \tau^2|+|\tau|},\quad \sum_{k=1}^{\infty} \frac{k^{1-2\er}}{|(k-1)^2 - \tau^2|+|\tau|}
\eeq
are finite and uniformly bounded with respect to $\tau$ for $|\tau|>1$.
\end{lemma}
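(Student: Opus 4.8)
The plan is to estimate the two series by comparison with a tame principal part and a controllable correction. Write each summand in the first series as $k^{1-2\er}/(|k^2-\tau^2|+|\tau|)$ and split the sum over $k$ into three ranges according to how close $k$ is to $|\tau|$: the ``near'' range $|k-|\tau||\le 1$, the ``moderate'' range $1<|k-|\tau||\le |\tau|/2$, and the ``far'' range $|k-|\tau||>|\tau|/2$ (together with the small-$k$ range $k\le |\tau|/2$, which is a subset of the far range). The point of the decomposition is that $|k^2-\tau^2|=|k-|\tau||\,(k+|\tau|)$, so in the moderate and far ranges the denominator is already comparable to $|k-|\tau||\cdot|\tau|$ (and to $|\tau|^2$ in the far range where $k\asymp|\tau|$ as well), while in the near range the extra additive $|\tau|$ in the denominator saves us from the vanishing of $|k^2-\tau^2|$.

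Here is how each piece is controlled. In the near range there are at most two values of $k$, each $\asymp|\tau|$, and the denominator is $\ge|\tau|$, so the contribution is $\lesssim |\tau|^{1-2\er}/|\tau|=|\tau|^{-2\er}\le 1$ for $|\tau|>1$. In the moderate range, $k\asymp|\tau|$ still holds, so $k^{1-2\er}\lesssim|\tau|^{1-2\er}$ and the denominator is $\gtrsim |k-|\tau||\cdot|\tau|$; summing $\sum_{1\le j\le |\tau|/2} 1/j \lesssim \log|\tau|$ gives a bound $\lesssim |\tau|^{1-2\er}\log|\tau|/|\tau| = |\tau|^{-2\er}\log|\tau|$, which is bounded for $|\tau|>1$ since $\er>0$. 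In the far range, $|k^2-\tau^2|+|\tau|\gtrsim \max(k^2,\tau^2)$; splitting once more into $k\le 2|\tau|$ (finitely many terms, each $\lesssim |\tau|^{1-2\er}/|\tau|^2 = |\tau|^{-1-2\er}$, so total $\lesssim |\tau|^{-2\er}$) and $k>2|\tau|$ (where the sum is $\lesssim \sum_{k>2|\tau|} k^{1-2\er}/k^2 = \sum k^{-1-2\er}<\infty$), one gets a uniformly bounded contribution. Altogether the first series is finite and bounded uniformly for $|\tau|>1$. For the second series one simply notes that $|(k-1)^2-\tau^2|+|\tau|$ differs from $|k^2-\tau^2|+|\tau|$ in a harmless way: reindex by $k'=k-1$, so that the $k'$-th term is $(k'+1)^{1-2\er}/(|k'^2-\tau^2|+|\tau|)$, and since $(k'+1)^{1-2\er}\le 2^{1-2\er}\max(1,k'^{1-2\er})\lesssim 1+k'^{1-2\er}$, this is dominated (up to the convergent tail $\sum 1/(|k'^2-\tau^2|+|\tau|)$, handled exactly as above and in fact easier) by a constant multiple of the first series plus a finite correction.

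The only mildly delicate point is the logarithmic factor appearing in the moderate range; the main obstacle, if any, is to make sure that the $\er>0$ genuinely absorbs the $\log|\tau|$ uniformly, i.e. that $\sup_{|\tau|>1}|\tau|^{-2\er}\log|\tau|<\infty$, which is immediate. Everything else is a routine division of the summation range and comparison of $|k^2-\tau^2|$ with $|k-|\tau||\cdot|\tau|$; no cancellation or clever trick is needed, only careful bookkeeping of the cases $k$ near, comparable to, or much larger than $|\tau|$.
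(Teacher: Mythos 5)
Your argument is correct and follows essentially the same route as the paper: split the sum at $k\asymp|\tau|$, use $|k^2-\tau^2|\asymp|k-|\tau||\,|\tau|$ in the comparable range to produce a harmonic sum of order $\log|\tau|$, absorb the logarithm with the leftover $|\tau|^{-2\er}$, and control the far tail by $\sum k^{-1-2\er}$ (the paper packages the near and moderate ranges into one inequality $|k^2-\tau^2|+|\tau|\ge|\tau|(|k-\tau|+1)$ followed by an integral comparison, but the substance is identical). One cosmetic quibble: in the far range with $k\le 2|\tau|$ there are $O(|\tau|)$ terms rather than ``finitely many,'' but your stated total bound $\lesssim|\tau|^{-2\er}$ already uses that count, so the conclusion is unaffected.
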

\begin{proof} 
For certainty, consider the first sum. Without loss of generality, we can assume $\tau>0$.
If $k^2\geqslant 2\tau^2$, then the denominator can be replaced with $\frac12 k^2$, 
and this implies that the "tail" of the sum converges uniformly.
Therefore, we may consider only $k^2<2\tau^2$. In this case,
$$
\sum_{k<2\tau}\frac{k^{1-2\er}}{|k^2 - \tau^2|+|\tau|}\leqslant 
2|\tau|^{1-2\eps}\sum_{k<2\tau}\frac{1}{|k^2 - \tau^2|+|\tau|}\leqslant 2\tau^{-2\eps}\sum_{k<2\tau}\frac{1}{|k-\tau|+1}.
$$
The last sum is bounded, because
$$
\tau^{-2\eps}\int\limits_0^{2\tau} \frac{dk}{|k - \tau| + 1} 
= 2\tau^{-2\eps}\int\limits_{\tau}^{2\tau}\frac{dk}{k-\tau+1}
= 2\tau^{-2\eps}\ln(\tau+1).\mbox{ }\qedhere
$$
\end{proof}

\begin{theorem}
\label{lq}
Assume that Condition ${\mathrm{A}(q)}$ holds.  
Then, for some $\tau_0>0$,
\beq
\l\||H_0(\tau)|^{-1/2}f\r\|_{L_q(M\times\T)}\leqslant C\|f\|_{L_2(M\times\T)},\quad \forall\:|\tau|>\tau_0,\,f\in L_2(M\times\T).
\eeq
\end{theorem}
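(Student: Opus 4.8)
The plan is to expand $f$ over the spectral subspaces of the self-adjoint operator $H_0$ and to estimate the $L_q$-norm cluster by cluster, feeding each cluster into Condition $\mathrm{A}(q)$. First note that, since $-\Delta_x\ge 0$ (with Dirichlet or Neumann conditions) and $|n+\pi b_1+\xi|^2\ge 0$, every eigenvalue $h_{j,n}(0)$ of $H_0$ is non-negative; hence $\sum_{k\ge 1}E_k=I$, so $f=\sum_{k\ge 1}E_k f$ and $\sum_{k\ge 1}\|E_k f\|^2_{L_2(M\times\T)}=\|f\|^2_{L_2(M\times\T)}$. Denote by $\Lambda_k$ the set of indices $(j,n)$ with $h_{j,n}(0)\in[(k-1)^2;k^2)$, so that $E_k$ is the orthogonal projection onto the closed linear span of $\{\varphi_{j,n}\}_{(j,n)\in\Lambda_k}$. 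The key structural observation is that $|H_0(\tau)|^{-1/2}$ is diagonal in the very same orthonormal basis $\{\varphi_{j,n}\}$ that diagonalizes $H_0$; therefore it commutes with every $E_k$, and $|H_0(\tau)|^{-1/2}E_k f$ lies again in $\operatorname{Ran}E_k$. Applying Condition $\mathrm{A}(q)$ (with some $\er$; we may assume $0<\er<1/2$, as the estimate only weakens when $\er$ decreases) to this function, and bounding $|H_0(\tau)|^{-1/2}$ on $\operatorname{Ran}E_k$ by $\sup_{\Lambda_k}|h_{j,n}(\tau)|^{-1/2}$, I obtain
\[
\bigl\||H_0(\tau)|^{-1/2}E_k f\bigr\|_{L_q(M\times\T)}\le Ck^{1/2-\er}\Bigl(\sup_{\Lambda_k}|h_{j,n}(\tau)|^{-1/2}\Bigr)\,\|E_k f\|_{L_2(M\times\T)}.
\]

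The next step is a clusterwise lower bound for $|h_{j,n}(\tau)|$. On $\Lambda_k$ one has $\re h_{j,n}(\tau)=h_{j,n}(0)-\tau^2\in[(k-1)^2-\tau^2;\,k^2-\tau^2)$, while, as already recorded just before Lemma \ref{lem}, $|\im h_{j,n}(\tau)|=2|\langle n+\pi b_1,b_1\rangle|\,|\tau|\ge 2\pi|\tau|$. Consequently $|h_{j,n}(\tau)|\ge\max\bigl(\operatorname{dist}(\tau^2,[(k-1)^2;k^2)),\,2\pi|\tau|\bigr)$. Comparing $\tau^2$ with the two endpoints of the interval, and treating separately the single index $k$ for which $\tau^2\in[(k-1)^2;k^2)$ — where the distance vanishes but then $|k^2-\tau^2|\le 2k-1\le C|\tau|$ as soon as $|\tau|>1$ — one gets, for all $|\tau|>1$,
\[
\sup_{\Lambda_k}|h_{j,n}(\tau)|^{-1}\le C\left(\frac{1}{|k^2-\tau^2|+|\tau|}+\frac{1}{|(k-1)^2-\tau^2|+|\tau|}\right).
\]

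Finally I would assemble the pieces. By the triangle inequality in $L_q$ followed by the Cauchy--Schwarz inequality over $k$,
\[
\bigl\||H_0(\tau)|^{-1/2}f\bigr\|_{L_q}\le\sum_{k\ge 1}\bigl\||H_0(\tau)|^{-1/2}E_k f\bigr\|_{L_q}\le C\left(\sum_{k\ge 1}k^{1-2\er}\Bigl(\frac{1}{|k^2-\tau^2|+|\tau|}+\frac{1}{|(k-1)^2-\tau^2|+|\tau|}\Bigr)\right)^{1/2}\|f\|_{L_2},
\]
where I used $\sum_k\|E_k f\|^2_{L_2}=\|f\|^2_{L_2}$; the same Cauchy--Schwarz bound shows that $\sum_k|H_0(\tau)|^{-1/2}E_k f$ converges absolutely in $L_q$, so interchanging summation and norm is legitimate. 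By Lemma \ref{lem} the bracketed series is finite and bounded uniformly for $|\tau|>1$, so the theorem follows with $\tau_0=1$. The one point that needs real care is the clusterwise lower bound and its matching with the precise denominators appearing in Lemma \ref{lem}: it is exactly the ``resonant'' clusters, where $\re h_{j,n}(\tau)$ can be arbitrarily small and one is forced to fall back on $|\im h_{j,n}(\tau)|\ge 2\pi|\tau|$, that dictate the two-term shape of the estimate; everything else is bookkeeping.
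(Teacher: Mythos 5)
Your proof is correct and follows essentially the same route as the paper: decompose $f$ over the spectral clusters $E_k$ of $H_0$, apply Condition $\mathrm{A}(q)$ cluster by cluster, bound $|h_{j,n}(\tau)|$ from below using the imaginary part $\ge 2\pi|\tau|$ together with the real part, apply Cauchy--Schwarz over $k$, and invoke Lemma~\ref{lem}. The only differences are cosmetic (you commute $|H_0(\tau)|^{-1/2}$ with $E_k$ before applying $\mathrm{A}(q)$ and fold the resonant cluster directly into the two-term denominator estimate, whereas the paper isolates the single resonant term and bounds it by $Ck^{-2\varepsilon}$).
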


\begin{proof}
Let $E_k$ be a spectral projector of $H_0$ onto $[(k-1)^2; k^2)$. 
Then
\begin{eqnarray*}
\l\||H_0(\tau)|^{-1/2}f\r\|_{L_q(M\times\T)}
\leqslant \sum_{k=1}^\infty \l\|E_k |H_0(\tau)|^{-1/2}f\r\|_{L_q(M\times\T)} \\
\leqslant 
C\sum_{k=1}^\infty k^{1/2-\er}\l\|E_k |H_0(\tau)|^{-1/2}f\r\|_{L_2(M\times\T)}
\leqslant 
C \sum_{k=1}^\infty k^{1/2-\er}
\l\|E_k |H_0(\tau)|^{-1/2}\r\| \cdot \|E_k f\|_{L_2(M\times\T)} ,
\end{eqnarray*}
from which, using Cauchy-Bunyakovsky-Schwarz inequality, we obtain
$$
\l\||H_0(\tau)|^{-1/2}f\r\|_{L_q(M\times\T)}^2 \leqslant
C \|f\|_{L_2(M\times\T)}^2 
\sum_{k=1}^\infty k^{1-2\varepsilon} \|E_k |H_0(\tau)|^{-1/2}\|^2 .
$$
The eigenvalues of $H_0$ are 
$|n + \pi b_1 + \xi|^2 + \mu_j$, 
$n\in \widetilde{\Gamma}$, $j\in \N$. 
The range of $E_k$ corresponds to the pairs $(j,n)$ such that
$(k-1)^2\le |n + \pi b_1 + \xi|^2 + \mu_j< k^2$. 
So,
\begin{multline*}
\l\|E_k |H_0(\tau)|^{-1/2}\r\|^2 =
\max_{|n + \pi b_1 + \xi|^2 + \mu_j\in [(k-1)^2;k^2)} 
\frac{1}{|h_{j, n} (\tau)|} \\
\leqslant \max_{|n + \pi b_1 + \xi|^2 + \mu_j\in [(k-1)^2;k^2)}
\frac{\sqrt{2}}{\l||n+\pi b_1+\xi|^2+\mu_j-\tau^2\r|+|\tau|}.
\end{multline*}
Finally, we need to show that the sum
\beq
\label{ev}
\sum_{k=1}^{\infty} \max_{|n + \pi b_1 + \xi|^2 + \mu_j\in [(k-1)^2;k^2)} 
\frac{k^{1-2\er}}
{\left| |n + \pi b_1 + \xi|^2 + \mu_j - \tau^2\r| +  |\tau|}
\eeq
is finite and uniformly bounded for $|\tau|>\tau_0$. 

To do this, we notice that in all the terms (maybe, all but one)
we can replace $|n + \pi b_1 + \xi|^2 + \mu_j$ 
with $(k-1)^2$ or $k^2$, and the term will not decrease, because, if $|\tau|\notin [k-1;k)$, 
then, after one of these substitutions, the denominator may only 
decrease. The term, for which $|\tau|\in [k-1;k)$, can be estimated 
by $C k^{-2\er}$ and does not affect the convergence. 
So, it is enough to consider two sums \eqref{ev}: 
we replace
$|n + \pi b_1 + \xi|^2 + \mu_j$ with $(k-1)^2$ in the first one,  
and with $k^2$ in the second one. Their boundness follows from Lemma \ref{lem}.
\end{proof}

We need the following fact to prove Theorem \ref{t3}:

\begin{lemma}
\label{auxl} 
Let $(M, \mu)$ be a measurable space with $\sigma$-finite measure, let
 $V \in L_p (M)$, $1 \le p < \infty$. 
Then for every $\de > 0$
there exists $c(\de)$ such that
$$
\int_M |V f g| d\mu \le
 \de \| f \|_{L_{2p'}(M)} \| g \|_{L_{2p'}(M)}
 + c (\de)  \| f \|_{L_2(M)} \| g \|_{L_2(M)},
 \quad f, g \in L_{2p'} (M),
$$
where $p'$ is the conjugate index to $p$.
\end{lemma}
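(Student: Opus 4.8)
The plan is to prove the estimate by a standard truncation of $V$ into a bounded part and a part of small $L_p$-norm; since all the integrands below are nonnegative, there is no need to worry a priori about finiteness, and the inequality is to be read in $[0,+\infty]$.

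Fix $R>0$ and write $V = V\,\mathbf{1}_{\{|V|\le R\}} + V\,\mathbf{1}_{\{|V|>R\}}$. For the first summand, since $|V|\,\mathbf{1}_{\{|V|\le R\}} \le R$ pointwise, the Cauchy--Bunyakovsky--Schwarz inequality gives
\[
\int_M |V|\,\mathbf{1}_{\{|V|\le R\}}\,|fg|\,d\mu \;\le\; R\int_M |fg|\,d\mu \;\le\; R\,\|f\|_{L_2(M)}\,\|g\|_{L_2(M)}.
\]
For the second summand I would apply H\"older's inequality with the three exponents $p$, $2p'$, $2p'$, which are admissible since $\tfrac1p + \tfrac1{2p'} + \tfrac1{2p'} = \tfrac1p + \tfrac1{p'} = 1$ (when $p=1$ one has $2p'=\infty$ and the bound is equally valid). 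This yields
\[
\int_M |V|\,\mathbf{1}_{\{|V|>R\}}\,|fg|\,d\mu \;\le\; \bigl\|V\,\mathbf{1}_{\{|V|>R\}}\bigr\|_{L_p(M)}\,\|f\|_{L_{2p'}(M)}\,\|g\|_{L_{2p'}(M)}.
\]
Adding the two bounds proves the Lemma with $c(\de)=R$, once $R=R(\de)$ has been chosen so that $\bigl\|V\,\mathbf{1}_{\{|V|>R\}}\bigr\|_{L_p(M)}\le\de$.

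It remains to justify that such a choice is possible. The functions $|V|^p\,\mathbf{1}_{\{|V|>R\}}$ tend to $0$ $\mu$-a.e. as $R\to\infty$ (recall that $V$, being in $L_p(M)$, is $\mu$-a.e. finite) and are dominated by $|V|^p\in L_1(M)$; hence, by the dominated convergence theorem, $\bigl\|V\,\mathbf{1}_{\{|V|>R\}}\bigr\|_{L_p(M)}^p = \int_M |V|^p\,\mathbf{1}_{\{|V|>R\}}\,d\mu \to 0$, so for every $\de>0$ one can pick $R(\de)$ with the required property and set $c(\de)=R(\de)$. I do not expect any genuine obstacle here; the only points that need a moment's care are the conjugacy of the H\"older exponents and the degenerate endpoint $p=1$.
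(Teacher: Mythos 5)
Your proof is correct and is essentially the same as the paper's: the paper writes $V=V_1+V_2$ with $\|V_1\|_{L_p}\le\de$ and $V_2\in L_\infty$ and then applies H\"older, and your truncation $V_1=V\,\mathbf{1}_{\{|V|>R\}}$, $V_2=V\,\mathbf{1}_{\{|V|\le R\}}$ is precisely the standard construction of such a decomposition, with the dominated-convergence step supplying the detail the paper leaves implicit.
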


\begin{proof}
The function $V$ can be expressed in the form
$$
V = V_1 + V_2, \quad \text{where} \quad
 \| V_1 \|_{L_p(M)} \le \de,
 \quad V_2 \in L_\infty (M).
$$
By H\"older inequality,
$$
\int_M |V f g| d\mu \le
 \de \| f \|_{L_{2p'}(M)} \| g \|_{L_{2p'}(M)}
 + \| V_2 \|_{L_\infty(M)}  \| f \|_{L_2(M)} \| g \|_{L_2(M)}.
\ \qedhere
$$
\end{proof}

\begin{theorem}
\label{result1}
Let $M$ satisfy ${\mathrm{A}(q)}$
for some $q \in (2, 2d / (d-2))$. 
Let $V \in L_p(M\times \T)$, where $p = q/(q-2)$. 
Then the operator $\l(H(\tau) - \la I\r)$ is invertible for
$|\tau| > \tau_0$, and
$\left\| \left( H (\tau) - \la I \right)^{-1} \right\| \le C |\tau|^{-1}$.
\end{theorem}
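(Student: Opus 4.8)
The plan is to perturb off the free operator $H_0(\tau)$ using the resolvent identity, with the Birman–Schwinger–type factorization through $|H_0(\tau)|^{-1/2}$. First I would reduce $H(\tau)-\lambda I$ to $H_0(\tau)$ plus a relatively bounded part: write $H(\tau)-\lambda I = H_0(\tau) + (V - \lambda I)$ (for the layer with $\sigma\ne 0$ one also adds the boundary term, which is handled by Suslina's result, but in the setting of Theorem~\ref{result1} only $V$ enters). Since $|H_0(\tau)|^{-1/2}$ is well-defined for $|\tau|>0$ by the spectral decomposition, and $H_0(\tau) = |H_0(\tau)|^{1/2} U_\tau |H_0(\tau)|^{1/2}$ for a unitary (diagonal, $\pm 1$ or phase) $U_\tau$ commuting with everything, the formal computation is
$$
H(\tau)-\lambda I = |H_0(\tau)|^{1/2}\bigl(U_\tau + |H_0(\tau)|^{-1/2}(V-\lambda I)|H_0(\tau)|^{-1/2}\bigr)|H_0(\tau)|^{1/2}.
$$
So it suffices to show the middle factor is invertible with uniformly bounded inverse for $|\tau|>\tau_0$, and then combine with the estimate \eqref{easy}, $\|H_0(\tau)^{-1}\|\le(2\pi|\tau|)^{-1}$, to get the $|\tau|^{-1}$ bound on $(H(\tau)-\lambda I)^{-1}$.

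The core estimate is that $\bigl\||H_0(\tau)|^{-1/2}(V-\lambda I)|H_0(\tau)|^{-1/2}\bigr\|$ can be made small (say $\le 1/2$) plus a term that decays in $\tau$. Here is where Theorem~\ref{lq} enters: by hypothesis $M$ satisfies $\mathrm{A}(q)$ with $q\in(2,2d/(d-2))$, so Theorem~\ref{lq} gives $\bigl\||H_0(\tau)|^{-1/2}\bigr\|_{L_2\to L_q}\le C$ uniformly for $|\tau|>\tau_0$; dually, $\bigl\||H_0(\tau)|^{-1/2}\bigr\|_{L_{q'}\to L_2}\le C$ where $q'=q/(q-1)$. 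Note $2p' = 2\cdot\frac{q/(q-2)}{q/(q-2)-1}=q$, so $L_{2p'}=L_q$. Now for $f,g\in L_2(M\times\T)$, apply Lemma~\ref{auxl} with the measurable space $M\times\T$: choosing $\delta$ small,
$$
\bigl|\langle (V-\lambda)|H_0(\tau)|^{-1/2}f,\,|H_0(\tau)|^{-1/2}g\rangle\bigr|
\le \delta \bigl\||H_0(\tau)|^{-1/2}f\bigr\|_{L_q}\bigl\||H_0(\tau)|^{-1/2}g\bigr\|_{L_q}
+ c(\delta)\bigl\||H_0(\tau)|^{-1/2}f\bigr\|_{L_2}\bigl\||H_0(\tau)|^{-1/2}g\bigr\|_{L_2} + |\lambda|\,\|\,\cdot\,\|^2,
$$
and the first term is $\le C\delta\|f\|_{L_2}\|g\|_{L_2}$ by Theorem~\ref{lq}, while in the second and third terms $\bigl\||H_0(\tau)|^{-1/2}\bigr\|_{L_2\to L_2}^2 = \max_{j,n}|h_{j,n}(\tau)|^{-1}\le(2\pi|\tau|)^{-1}\to 0$ by \eqref{easy}. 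Hence, fixing $\delta$ with $C\delta<1/4$ and then taking $|\tau|>\tau_0$ large enough that $(c(\delta)+|\lambda|)(2\pi|\tau|)^{-1}<1/4$, the middle-factor perturbation has norm $<1/2$, so $U_\tau+(\text{it})$ is invertible by Neumann series with inverse bounded by $2$.

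Assembling: $(H(\tau)-\lambda I)^{-1} = |H_0(\tau)|^{-1/2}\bigl(U_\tau + |H_0(\tau)|^{-1/2}(V-\lambda I)|H_0(\tau)|^{-1/2}\bigr)^{-1}|H_0(\tau)|^{-1/2}$, and since $\bigl\||H_0(\tau)|^{-1/2}\bigr\|_{L_2\to L_2}^2\le(2\pi|\tau|)^{-1}$ while the middle inverse is bounded by $2$, we get $\|(H(\tau)-\lambda I)^{-1}\|\le C|\tau|^{-1}$ as claimed. I expect the main obstacle — and the point needing the most care — to be the justification that this formal factorization is legitimate: one must check that $|H_0(\tau)|^{1/2}$ maps $H^1(M\times\T)$ (the form domain) appropriately, that the quadratic form of $V-\lambda I$ is indeed controlled in the sense of Lemma~\ref{auxl} by the $L_q$ norms of $|H_0(\tau)|^{-1/2}f$ (which requires $|H_0(\tau)|^{-1/2}f\in L_{2p'}=L_q$, guaranteed by Theorem~\ref{lq}), and that invertibility of the sandwiched operator on $L_2$ transfers back to invertibility of $H(\tau)-\lambda I$ on its natural domain — this is the standard KLMN/form-perturbation bookkeeping, but with the $\tau$-dependent weight $|H_0(\tau)|^{-1/2}$ it has to be done with the uniformity in $\tau$ tracked throughout. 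The verification that $q\in(2,2d/(d-2))$ is exactly the range making $p=q/(q-2)$ satisfy $2p'=q$ and $p<\infty$, i.e. the Sobolev-type exponent matching, is routine once the scheme is in place.
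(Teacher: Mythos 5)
Your proposal is correct and is essentially the same argument as the paper's, just phrased in the language of operator factorization rather than a form lower bound. The paper's version is the form-theoretic twin of your Birman--Schwinger sandwich: instead of writing $H(\tau)-\lambda I = |H_0(\tau)|^{1/2}(\Phi_0(\tau)+|H_0(\tau)|^{-1/2}(V-\lambda)|H_0(\tau)|^{-1/2})|H_0(\tau)|^{1/2}$ and inverting the middle factor by Neumann series, the paper sets $\lambda=0$ (absorbing the constant into $V$, since $V-\lambda\in L_p$ iff $V\in L_p$), picks the test vector $v=\Phi_0(\tau)u$ where $\Phi_0(\tau)$ is the unitary in the polar decomposition $H_0(\tau)=\Phi_0(\tau)|H_0(\tau)|$ (your $U_\tau$), observes $(H_0(\tau)u,v)=\||H_0(\tau)|^{1/2}u\|^2\ge 2\pi|\tau|$, and then bounds $|(Vu,v)|\le C\delta\,(H_0(\tau)u,v)+c(\delta)$ via the same combination of Lemma~\ref{auxl} (with $2p'=q$) and Theorem~\ref{lq}. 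This yields $|(H(\tau)u,v)|\ge C_1|\tau|$ directly, so the lower bound for $(H(\tau)-\lambda I)^{-1}$ follows without ever having to justify the operator factorization domain-by-domain — which is exactly the bookkeeping you flag at the end as the delicate point. So the paper's route sidesteps the one place where your write-up is only formal; otherwise the two arguments are identical in content, and your identification of the role of $q\in(2,2d/(d-2))$, $p=q/(q-2)$, $2p'=q$ matches the paper exactly.
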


\begin{proof}
The condition on $V$ is invariant with respect to adding a constant.
So, without loss of generality, we can assume $\lambda=0$. 
It is enough to prove the following statement: 
for any $u\in \dom(H(\tau))$, $\|u\|_{L_2(M\times\T)}=1$, 
there exists $v\in \dom(H(\tau))$, $\|v\|_{L_2(M\times\T)}=1$, such that
\begin{equation*}
|(H(\tau)u,v)|\ge C|\tau|,\quad |\tau|>\tau_0.
\end{equation*}
Let $H_0(\tau)=\Phi_0(\tau)|H_0(\tau)|$ be the polar decomposition of $H_0(\tau)$.
We set
\beq
\label{vint}
v=\Phi_0(\tau)u. 
\eeq
Then,
\beq
\label{simple}
(H_0(\tau)u,v)=(|H_0(\tau)|u,u)\geqslant 2\pi|\tau|
\eeq
by \eqref{easy}, and
$$
(H_0(\tau)u,v)=\||H_0(\tau)|^{1/2}u\|^2_{L_2(M\times\T)}=\||H_0(\tau)|^{1/2}v\|^2_{L_2(M\times\T)}.
$$
Let us estimate the term $(Vu,v)$ using Lemma
\ref{auxl} and Theorem \ref{lq}:
\begin{multline}
\label{vest}
|(Vu,v)|\leqslant \delta \|u\|_{L_{q}(M\times\T)}\|v\|_{L_{q}(M\times\T)}+
c(\delta)\leqslant\\
\leqslant C\delta \||H_0(\tau)|^{1/2}v\|_{L_2(M\times\T)}\||H_0(\tau)|^{1/2}u\|_{L_2(M\times\T)} +c(\delta)=C\delta (H_0(\tau)u,v)+
c(\delta).
\end{multline}
This implies
$$
|(H(\tau)u,v)|\geqslant (1-C\delta)(H_0(\tau)u,v)-c(\delta)\geqslant 2\pi(1-C\delta)|\tau|-c(\delta)\geqslant C_1|\tau|
\quad \text{for} \ |\tau|>\tau_0,\ \delta<1/C.
$$
\end{proof}
\noindent {\it Proof of Theorem {\rm\ref{t3}}, the case of a manifold without boundary.}

If $\dd M = \emptyset$, then Corollary \ref{sl22} implies $\mathrm{A}(q)$ for all $q < 2d/(d-2)$.
From Theorem \ref{result1}, we get \eqref{18} for any $p>d/2$.
\qed

\noindent{\it Proof of Theorem {\rm\ref{t3}}, 
the case of Dirichlet or Neumann boundary conditions.}

If $k=1$ ($M$ is a line segment), then Theorem \ref{prod_t} again yields $\mathrm{A}(q)$ for all $q < 2d/(d-2)$.
And all $p>d/2$ are suitable.

If $d=3$, then Theorem \ref{mainit2} gives $\mathrm{A}(q)$ only if $q<6$,
so we need $p>3/2$.

If $d\geqslant 4$, then, again by Theorem \ref{mainit2}, Condition $\mathrm{A}(q)$ holds for 
 $q<(2d-4)/(d-3)$, and the corresponding condition on $V$ is
$V \in L_p(M\times \O)$, where $p > d - 2$.
\qed

To study the third type boundary condition, we use the following result from \cite{Su}.

\begin{theorem}
\label{su}
Let $k=1$, $M = [0, a]$, and assume that $\sigma$ satisfies \eqref{sigma}.
Then
\begin{eqnarray*}
\int\limits_{\O} |\sigma(0,y)|
\left|\l(|H_{0}(\tau)|^{-1/2}u\r)(0,y)\right|^2\,dy \\
+ \int\limits_{\O} |\sigma(a,y)|
\left|\l(|H_{0}(\tau)|^{-1/2}u\r)(a,y)\right|^2\,dy
\le \tilde{c}(\tau)\|u\|^2_{L_2([0,a]\times \O)},
\end{eqnarray*}
where $\lim\limits_{|\tau|\to\infty} \tilde{c}(\tau)=0$ uniformly over 
 $\xi'$ and $u\in L_2([0,a]\times\O)$.
\end{theorem}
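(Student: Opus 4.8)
My plan is to reduce the weighted trace bound to a boundary analogue of the spectral cluster estimates and then to run the summation argument from the proof of Theorem~\ref{lq}. By the dilatation invariance in $x$ I may take $a=\pi$, so that $H_0(\tau)$ (the operator with $V=0$, $\si=0$) carries Neumann conditions --- since $h_\si$ reduces to $h_N$ when $\si=0$ --- and its eigenfunctions are $\ph_{j,n}(x,y)=|\O|^{-1/2}\ph_j(x)e^{i\<n,y\>}$ with $\ph_j$ the normalised cosines, $\mu_j=j^2$, and eigenvalues $h_{j,n}(\tau)$ as above (in particular $|\ph_j(0)|$ and $|\ph_j(\pi)|$ are uniformly bounded). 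Put $g_\tau(y)=\bigl(|H_0(\tau)|^{-1/2}u\bigr)(0,y)$, and similarly at $x=\pi$. By H\"older's inequality with the conjugate pair $(q,q')$,
$$
\int_\O |\si(0,y)|\,|g_\tau(y)|^2\,dy \le \|\si(0,\cdot)\|_{L_q(\O)}\,\|g_\tau\|_{L_{2q'}(\T)}^2 ,
$$
and since $2q'=4$ when $d=3$ and $2q'=(4d-4)/(2d-3)\in(2,4)$ when $d\ge4$, it suffices to prove $\|g_\tau\|_{L_{2q'}(\T)}\le\eps(\tau)\|u\|_{L_2(\Xi)}$ with $\eps(\tau)\to0$ as $|\tau|\to\infty$, uniformly in $\xi$.

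Let $E_k$ be the spectral projector of $H_0=H_0(0)$ onto $[(k-1)^2;k^2)$. Imitating the proof of Theorem~\ref{lq}, I write $|H_0(\tau)|^{-1/2}u=\sum_k E_k|H_0(\tau)|^{-1/2}u$, take the trace at $x=0$, and use the triangle inequality; what I need is a boundary version of the cluster estimate, namely
$$
\bigl\| (E_k w)(0,\cdot) \bigr\|_{L_s(\T)} \le C\,k^{\beta(s)}\,\|w\|_{L_2(\Xi)},\qquad w\in L_2(\Xi),
$$
for $s=2q'$ and some exponent $\beta(s)<\tfrac12$. Granting this, and using $\|E_k|H_0(\tau)|^{-1/2}\|=\max_{\mu_j+|n+\pi b_1+\xi|^2\in[(k-1)^2;k^2)}|h_{j,n}(\tau)|^{-1/2}$ together with $|\im h_{j,n}(\tau)|\ge 2\pi|\tau|$, the Cauchy--Bunyakovsky inequality and the substitution of $(k-1)^2$ or $k^2$ for $\mu_j+|n+\pi b_1+\xi|^2$ (exactly as in Theorem~\ref{lq}) reduce everything to the series
$$
\sum_{k=1}^\infty \frac{k^{2\beta(s)}}{|k^2-\tau^2|+|\tau|} \quad\text{and}\quad \sum_{k=1}^\infty \frac{k^{2\beta(s)}}{|(k-1)^2-\tau^2|+|\tau|} ;
$$
by the computation in the proof of Lemma~\ref{lem} these are $O\bigl(|\tau|^{2\beta(s)-1}\ln|\tau|\bigr)$, so $\beta(s)<\tfrac12$ gives $\|g_\tau\|_{L_{2q'}(\T)}^2\le C|\tau|^{2\beta(s)-1}\ln|\tau|\,\|u\|_{L_2(\Xi)}^2$ and hence $\tilde c(\tau)\to0$.

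The crux is the boundary cluster estimate with $\beta(s)<\tfrac12$. Here $(E_k w)(0,\cdot)=|\O|^{-1/2}\sum_n F_n e^{i\<n,y\>}$ with $F_n=\sum_{j:\, j^2+a_n\in[(k-1)^2;k^2)}\widehat w_{j,n}\,\ph_j(0)$, where $a_n=|n+\pi b_1+\xi|^2$; the inner sum has at most $\sqrt{2k-1}+1$ terms, and the Fourier support in $n$ lies in the ball $\{|n+\pi b_1+\xi|<k\}$ of the $(d-1)$-dimensional torus $\T$. The naive combination of the bound $\sum_n|F_n|^2\lesssim\sqrt{k}\,\|w\|_{L_2(\Xi)}^2$ (Cauchy--Schwarz in the $j$-sum, which runs over $\lesssim\sqrt k$ indices) with Bernstein's inequality on $\T$ yields only $\beta(s)=\tfrac14+(d-1)(\tfrac12-\tfrac1s)$, which equals $\tfrac12$ for $d\ge4$ and $\tfrac34$ for $d=3$ --- borderline, or insufficient. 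To do better one must exploit that the $\sqrt k$-amplification of $F_n$ and the loss in Bernstein's inequality occur in disjoint regimes --- on the thin shell near $|n+\pi b_1+\xi|=k$, respectively in the interior of the ball: a dyadic decomposition of the $n$-sum in the quantity $k^2-a_n$, combined with Stein--Tomas--type discrete restriction estimates on $\T$ (for $d=3$, the classical $L_4$ lattice-point estimate on $\T^2$; in general, trace analogues of the estimates used in \cite{So,SmSo}), produces an exponent strictly below $\tfrac12$, and precisely explains the thresholds $q=2$ $(d=3)$ and $q=2d-2$ $(d\ge4)$ in \eqref{sigma}. This sharp estimate is the main obstacle; the three-dimensional case is the most delicate, since there one must go beyond the unit-width cluster decomposition (Bernstein on the $k$-ball in $\T^2$ forces the exponent $\tfrac12$ for this piece alone), and a finer, resolvent-based argument is needed. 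All of this is carried out in \cite{Su}.
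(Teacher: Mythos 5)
Theorem~\ref{su} is not proved in this paper at all: the sentence immediately preceding it reads ``To study the third type boundary condition, we use the following result from \cite{Su},'' and the theorem is imported verbatim from Suslina's article. So there is no ``paper's own proof'' to compare against --- the burden falls entirely on your proposal to supply one, and it does not.

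Your argument correctly reduces the weighted trace bound, via H\"older with the pair $(q,q')$, to a bound on $\|(|H_0(\tau)|^{-1/2}u)(0,\cdot)\|_{L_{2q'}(\T)}$, and correctly identifies that the engine would have to be a boundary-trace analogue of the spectral cluster estimate, run through the same Cauchy--Bunyakovsky and Lemma~\ref{lem} machinery as Theorem~\ref{lq}. But then you compute --- correctly --- that the ``naive'' trace cluster bound $\|(E_k w)(0,\cdot)\|_{L_s(\T)}\lesssim k^{1/4+(d-1)(1/2-1/s)}\|w\|_{L_2}$ gives $\beta=1/2$ for $d\ge 4$ and $\beta=3/4$ for $d=3$, which is respectively borderline and insufficient: with $2\beta\ge 1$ the resulting series in $k$ does not produce $\tilde c(\tau)\to 0$. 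At that point the proposal becomes a wish list (``dyadic decomposition in $k^2-a_n$'', ``Stein--Tomas--type discrete restriction'', ``a finer, resolvent-based argument'' for $d=3$) followed by the admission ``All of this is carried out in \cite{Su}.'' That is a genuine gap, not a proof: the one estimate that everything hinges on is neither stated precisely nor proved, and you yourself flag it as the main obstacle. Note also that Suslina's actual argument in \cite{Su} does not go through an $E_k$-cluster decomposition of the trace at all --- it works directly with the explicit Fourier representation of $|H_0(\tau)|^{-1/2}$ on the layer and estimates the resulting lattice sums --- so the approach you sketch, even if it could be made to work, is not a reconstruction of the cited proof but a speculative alternative. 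As it stands, the proposal identifies why the theorem is nontrivial but does not establish it.
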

\noindent {\it Proof of Theorem {\rm\ref{t3}}, 
the case of the third type boundary condition.}

Let $p>d/2$, $q = 2p' < 2d/(d-2)$.
Theorem \ref{prod_t} guaranties $\mathrm{A}(q)$.
Let $V \in L_p ([0,a]\times\O)$. 

For an arbitrary $u\in \dom (H_{\sigma})$, $\|u\|_{L_2([0,a]\times \O)}=1$,
let $v$ be defined by \eqref{vint}. 
Then
$$
(H_{\sigma}(\tau)u,v)=(H_0(\tau)u,v)+(Vu,v)+
\int\limits_{\O}\sigma(a,y)u(a,y)\overline{v}(a,y)\,dy -
\int\limits_{\O}\sigma(0,y)u(0,y)\overline{v}(0,y)\,dy .
$$
The first two terms are estimated in
\eqref{vest} and \eqref{simple}. 
Let us estimate the last one (the same can be done for the remaining term). Theorem \ref{su} gives
\begin{eqnarray*}
\left|\int\limits_{\O} \sigma(0,y)u(0,y)\overline{v}(0,y)\,dy \right|
\leqslant
\frac12 \int\limits_{\O} |\sigma(0,y)| 
\left(|u(0,y)|^2 + |v(0,y)|^2\right) \,dy \\
\leqslant \frac{\tilde{c}(\tau)}{2} 
\l( \l\| |H_0(\tau)|^{1/2}u\r\|^2_{L_2([0,a]\times\Omega)}
+ \l\| |H_0(\tau)|^{1/2}v\r\|^2_{L_2([0,a]\times\Omega)} \r)=
\tilde{c}(\tau)\l(H_0(\tau)u,v\r).
\end{eqnarray*}
Hence,
$$
\l|(H_{\sigma}(\tau)u,v)\r|\geqslant 
(H_0(\tau)u,v) \l(1-C\delta- 2\tilde{c}(\tau)\r)-c(\delta)
\geqslant 2\pi\l(1-C\delta-2\tilde{c}(\tau)\r)|\tau|-c(\delta),
\quad |\tau|>\tau_0,
$$
where $\delta$ and $\tau_0$ are chosen in such a way that 
 $C\delta+2\tilde{c}(\tau)<1$, $|\tau|>\tau_0$. 
The last estimate implies \eqref{18}.
\qed


\end{document}